\setlist[enumerate]{label=({\arabic*})}
\newtheorem{defn}{Definition}[subsection]
\newtheorem{thm}[defn]{Theorem}
\newtheorem{exmp}[defn]{Example}
\newtheorem{lem}[defn]{Lemma}
\newtheorem{rmk}[defn]{Remark}
\newtheorem{prob}[defn]{Problem}
\newtheorem{prop}[defn]{Proposition}
\newtheorem{cor}[defn]{Corollary}
\newenvironment{manualtheorem}[1]{%
  \manualtheoreminner
}{\endmanualtheoreminner}
\newenvironment{manualcor}[1]{%
  \manualcorinner
}{\endmanualcorinner}
\newcommand{\N}{\mathbb{N}}
\newcommand{\R}{\mathbb{R}}
\newcommand{\Z}{\mathbb{Z}}
\newcommand{\T}{\mathbb{T}}
\newcommand{\B}{{\mathcal B}}
\newcommand{\hc}[1]{{\mathcal {HC} {#1}}}
\newcommand{\hd}[1]{{\mathcal {HD} {#1}}}
\newcommand{\gh}[1]{{\mathcal {GH} {#1}}}
\newcommand{\U}{{\mathcal U}}
\newcommand{\uc}[1]{{\mathcal U}_{{C}}{#1}}
\newcommand{\ud}[1]{{\mathcal U}_{{D}}{#1}}
\newcommand{\ughp}[1]{{\mathcal U}_{{GH+}}{#1}}
\newcommand{\ughm}[1]{{\mathcal U}_{{GH-}}{#1}}
\DeclareMathOperator*\lowlim{\underline{lim}}
\DeclareMathOperator*\uplim{\overline{lim}}
\date{}
\begin{document}
\title{Generalized Hyperbolicity and Shadowing in $L^p$ spaces}

\author{Emma D'Aniello \\
\and  
Udayan B. Darji \\ 
\and 
Martina Maiuriello  \\}

\newcommand{\Addresses}{{
  \bigskip
  \footnotesize

  E.~D'Aniello, \\
  \textsc{Dipartimento di Matematica e Fisica,\\ Universit\`a degli Studi della Campania ``Luigi Vanvitelli",\\
  Viale Lincoln n. 5, 81100 Caserta, ITALIA} \\
  \textit{E-mail address: \em emma.daniello@unicampania.it} 
  \medskip

  U.B.~Darji,\\
  \textsc{Department of Mathematics,\\ University of Louisville,\\
  Louisville, KY 40292, USA}\\
  \textit{E-mail address: \em ubdarj01@louisville.edu}
  \medskip

  M.~Maiuriello,\\
  \textsc{Dipartimento di Matematica e Fisica,\\ Universit\`a degli Studi della Campania ``Luigi Vanvitelli",\\
  Viale Lincoln n. 5, 81100 Caserta, ITALIA}\\
  \textit{E-mail address: \em martina.maiuriello@unicampania.it}

}}

\maketitle

\begin{abstract}
It is rather well-known that hyperbolic operators have the shadowing property. In the setting of finite dimensional Banach spaces, having the shadowing property is equivalent to being hyperbolic. In 2018, Bernardes et al. constructed an operator with the shadowing property which is not hyperbolic, settling an open question. In the process, they introduced a class of operators which has come to be known as generalized hyperbolic operators. This class of operators seems to be an important bridge between hyperbolicity and the shadowing property. In this article, we show that for a large natural class of operators on $L^p(X)$ the notion of generalized hyperbolicity and the shadowing property coincide. We do this by giving sufficient and necessary conditions for a certain class of operators to have the shadowing property. We also introduce computational tools which allow construction of operators with and without the shadowing property. Utilizing these tools, we show how some natural probability distributions, such as the Laplace distribution and the Cauchy distribution, lead to operators with and without the shadowing property on $L^p(X)$. 
\end{abstract}

\let\thefootnote\relax\footnote{\date{\today} \\
2010 {\em Mathematics Subject Classification:} Primary: 37B65, 47B33 Secondary: 37D05, 47A16.\\
{\em Keywords:} Shadowing Property, Hyperbolicity, Generalized Hyperbolicity, Composition Operators, Dissipative Systems.}
\newpage
\tableofcontents
\newpage

%##############################
\section{Introduction}
%##############################
Linear dynamics is a relatively recent area of mathematics which lies at the intersection of operator theory and dynamical systems. During the last two decades, a flurry of intriguing results have been obtained in this area concerning dynamical properties such as transitivity,  mixing, Li-Yorke, Devaney and distributional chaos, invariant measures, ergodicity and frequent hypercyclicity.  We refer the reader to books \cite{BernardesMessaoudiETDS2020} and \cite{GrosseErdmannMaguillot2011} for general information on the topic. 

Hyperbolic dynamics on manifolds is an important part of smooth dynamical systems. Indeed, some of the important questions in hyperbolic dynamics concern relationships between hyperbolicity, the shadowing property, expansivity and structural stability. It is rather well-known that hyperbolicity implies the shadowing property, expansivity and structural stability. Classical results of Smale \cite{Smale1967} and Walters \cite{Walters1978} show that the shadowing property and expansivity imply spectral decomposition and structural stability. Abdenur and Diaz \cite{AbdenurDiaz2007} showed   that, for generic $C^1$ homeomorphisms on closed manifolds, the shadowing property implies hyperbolicity in certain important contexts. Pilyugin and Tikhomirov \cite{PilyuginTikhomirov2010}  showed 
 that the Lipschitz shadowing property and structural stability are equivalent for $C^1$ homeomorphisms of closed smooth manifolds. This is just a glimpse of important works in this field.

Although hyperbolic dynamics of linear operator is  rather recent, there are some classical results from the 1960's where relationships between expansivity and spectrum of an operator were obtained. In particular, Eisenberg and Hedlund \cite{EisenbergHedlundPJM1970, HedlundPJM1971} showed that an invertible operator $T$ is uniformly expansive if and only if $\sigma_a (T)$, the approximate spectrum of $T$, does not intersect the unit circle $\T$. In 2000, Mazur \cite{MazurFDE2000} showed that  an invertible normal operator on a Hilbert space has the shadowing property if and only if it is hyperbolic. A detailed study of hyperbolicity, expansivity, the shadowing property and the spectrum of an operator on Banach space was initiated by Bernardes et al. in 2018 \cite{BernardesCiriloDarjiMessaoudiPujalsJMAA2018}. Among many results obtained there, an important question was settled, namely that there are operators with the shadowing property which are not hyperbolic. This result was proved by constructing a class of operators which have a weaker splitting than the usual splitting of hyperbolic operators. Cirilo et al. in a subsequent work named them  generalized hyperbolic operators. This class of operators seems to be the correct bridge between hyperbolicity and the shadowing property as it is evident by results in \cite{BernardesMessaoudiETDS2020}, \cite{BernadesMessaouidPAMS2020} 
and \cite{CiriloGollobitPujals2020}. That generalized hyperbolic operators have the shadowing property was shown in [Theorem A, \cite{BernardesCiriloDarjiMessaoudiPujalsJMAA2018}]. Bernardes and Messaoudi [Theorem 18, \cite{BernardesMessaoudiETDS2020}] gave a characterization of  weighted shifts 
which have the shadowing property. From this characterization one obtains that for the class of weighted shifts on $\ell^p(Z)$, generalized hyperbolicity is equivalent to the shadowing property. 
Before describing results in this article, we mention some seminal recent results. Bernardes and Messaoudi \cite{BernardesMessaoudiETDS2020} showed that a linear operator on a Banach space is hyperbolic if and only if it is expansive and has the shadowing property. In \cite{BernadesMessaouidPAMS2020} they also showed that all generalized hyperbolic operators are structurally stable. We also point out that general properties of generalized hyperbolic operators with applications are carried out in \cite{CiriloGollobitPujals2020}. In a very different direction from linear dynamics, relationship between hyperbolicity, expansivity and the shadowing property in the setting of noncompact spaces was carried out in \cite{LeeNguyenYang2018}.

In this article we explore the boundary between generalized hyperbolicity and the shadowing property. In particular, we show that for a large natural class of operators on $L^p(X)$ the notions of generalized hyperbolicity and the shadowing property coincide. More specifically, we start with a $\sigma$-finite dissipative measure space $(X, \B, \mu)$ and a nonsingular, invertible, bimeasurable transformation $f:X \rightarrow X$. We consider the composition operator $T_f:L^p(X) \rightarrow L^p(X)$ given by $T_f(\varphi) = \varphi \circ f$. If the Radon-Nikodym derivative of $\mu(f)$ with respect to $\mu$ is bounded below away from zero, then $T_f$ is a bounded operator. We assume such is the case for the Radon-Nikodym derivative of $\mu(f)$ and $\mu(f^{-1})$ with respect to $\mu$. Moreover, we assume that our measurable transformation satisfies the bounded distortion condition. Among this class of operators, we give necessary and sufficient conditions for an operator to have the shadowing property: Theorems SS, SN. Using the obtained characterization of the shadowing property, we conclude as a corollary that, in this particular class of operators, the shadowing property and generalized hyperbolicity coincide. In Theorem RN, we give computationally useful conditions which easily allow  construction of operators with and without shadowing property. In particular, 
we show how some natural probability distributions, such as the Laplace distribution and the Cauchy distribution, lead to operators with and without the shadowing property on $L^p(X)$. 

At this point we like to point out that a systematic study of composition operators in the setting of linear dynamics was initiated in \cite{BayartDarjiPiresJMAA2018} and \cite{BernardesDarjiPiresMM2020}. In \cite{BayartDarjiPiresJMAA2018}, necessary and sufficient conditions were given for a composition operator to be topologically transitive and mixing. Necessary and sufficient conditions for an operator to be Li-Yorke chaotic were given in \cite{BernardesDarjiPiresMM2020}. The motivation for the study of composition operators is to have a concrete but large class of operators which can be utilized as examples and counterexamples in linear dynamics. These types of operators include weighted shifts but the class is much larger than that. For example, it includes operators induced by measures on odometers \cite{BDDPOdometers}.

The paper is organized as follows. In Section~2, we give definitions and background results. In Section~3, we state our main results. In Section~4, we construct concrete examples. Section~5 consists of proofs, and Section~6 of open problems.
%##############################
\section{Definitions and Background Results}
%##############################
Given a Banach space $X$, by $S_X$ we denote the {\em unit sphere} of $X$, that is $S_X=\{x \in X\, : \, \Vert x \Vert =1 \}$.   If $T$ is a bounded operator on a Banach space $X$, then $\sigma(T),  \sigma_p(T), \sigma_a(T)$ and $\sigma_r(T)$ denote, respectively, the {\em spectrum}, the {\em point spectrum}, the {\em approximate point spectrum} and the {\em residual spectrum} of $T$, while $r(T)$ denotes the {\em spectral radius} of $T$ and it satisfies the {\em spectral radius formula} $r(T)=\lim_{n \rightarrow \infty} \Vert T^n \Vert ^{\frac{1}{n}}$.  In the sequel, as usual, ${\mathbb N}$ denotes the set of all positive integers and ${\mathbb N}_0={\mathbb N}\cup \{0\} .$ Moreover, ${\mathbb D}$ and ${\mathbb T}$ denote the open unit disk and the unit circle in the complex plane ${\mathbb C}$, respectively.

%@@@@@@@@@@@@@
\subsection{Weighted Shifts}
%@@@@@@@@@@@@@
Due to the importance of {\em weighted shifts} in the area of linear dynamics and operator theory, the study of their dynamical behavior has received special attention in recent years. We recall some preliminary definitions and results.

\begin{defn} 
Let $A= {\mathbb Z}$ or $A = {\mathbb N}$. Let $X=\ell^p(A)$, $1 \leq p < \infty$ or $X=c_0(A).$ Let  $w=\{w_n\}_{n \in A}$ be a bounded sequence of scalars, called {\em weight sequence}.  Then, the {\em weighted backward shift $B_w$ on $X$} is defined by \[B_w( \{x_n\}_{n \in A}) =\{w_{n+1}x_{n+1}\}_{n \in A}.\]
If $A= {\mathbb Z}$, the shift is called {\em bilateral}. If $A = {\mathbb N}$, then the shift is {\em unilateral}. 
A unilateral weighted backward shift is not invertible. On the other hand,  a bilateral $B_w$ is invertible if and only if  $\inf_{n \in \mathbb Z} \vert w_n \vert >0$.

\end{defn}

%@@@@@@@@@@@@@
\subsection{Expansivity}
%@@@@@@@@@@@@@
Expansivity is an important concept in hyperbolic dynamics. In the context of linear dynamics, various notions of expansivity have simpler formulations.  We use them as defined below. We refer the reader to \cite{BernardesCiriloDarjiMessaoudiPujalsJMAA2018} for a discussion of how they are obtained from the original definitions in the general setting.

\begin{defn}
An invertible operator $T$ on a Banach space $X$ is said to be {\em expansive} if for each $x \in S_X$ there exists $n \in \mathbb Z$  such that $\Vert T^n x\Vert \geq 2$.
\end{defn}

\begin{defn}
An invertible operator $T$ on a Banach space $X$ is said to be {\em uniformly expansive} if there exists $n \in \mathbb N$ such that \[z \in S_X \Longrightarrow \Vert T^nz \Vert \geq 2 \text{ or } \Vert T^{-n}z \Vert \geq 2.\]
\end{defn}

We point out that in the previous definitions, the number 2 can be replaced by any number $c>1$. 
In \cite{BernardesCiriloDarjiMessaoudiPujalsJMAA2018}, the authors characterize various types of expansivity for invertible operators on Banach spaces (\cite{BernardesCiriloDarjiMessaoudiPujalsJMAA2018}: Proposition 19) and, in particular, they also obtain a complete characterization of the notions of expansivity for weighted shifts (\cite{BernardesCiriloDarjiMessaoudiPujalsJMAA2018}: Theorem E). 

%@@@@@@@@@@@@@
\subsection{Shadowing} 
%@@@@@@@@@@@@@
As for expansivity, the concept of shadowing has a simplified formulation in the setting of linear dynamics. We use this formulation in our work.
\begin{defn}
Let $T:X \rightarrow X$ be an operator on a Banach space $X$. A sequence $\{x_n\}_{n \in \mathbb Z}$ in $X$ is called a {\em $\delta$-pseudotrajectory} of $T$, where $\delta >0$, if \[ \Vert Tx_n - x_{n+1} \Vert \leq \delta, \text{ for all $n \in \mathbb Z$.}\]
\end{defn}

The basic property of an operator related to the notion of a pseudotrajectory is {\em the shadowing property}:

\begin{defn}
Let $T:X \rightarrow X$ be an invertible operator on a Banach space $X$. Then $T$ is said to have the {\em shadowing property} if for every $\epsilon >0$ there exists $\delta >0$ such that every $\delta $-pseudotrajectory  $\{x_n\}_{n \in \mathbb Z}$ of $T$ is $\epsilon $-shadowed by a real trajectory of $T$, that is, there exists $x \in X$ such that \[ \Vert T^n x-x_n \Vert < \epsilon, \text{ for all $n \in \mathbb Z$.}\]
\end{defn}

We can define the notion of {\em positive shadowing} for an operator $T$ by replacing the set $\mathbb Z$ by $\mathbb N$ in the above definition. In such ``positive" case, $T$ 
does not need to be invertible.  

The following is an equivalent formulation of shadowing in the context of linear dynamics which one normally uses.
\begin{lem} [\cite{Pilyugin1999}] \label{LEM1}
An invertible operator $T$ on a Banach space $X$ has the shadowing property if and only if there is a constant $K>0$ such that, for every bounded sequence  $\{z_n\}_{n \in \mathbb Z}$ in $X$, there is a sequence $\{y_n\}_{n \in \mathbb Z}$ in $X$ such that \[ \sup_{n \in \mathbb Z} \Vert y_n \Vert \leq K \sup_{n \in \mathbb Z} \Vert z_n \Vert \hspace{0.3 cm}\text{ and } \hspace{0.3 cm} y_{n+1}=Ty_n + z_n, \text{ for all $n \in \mathbb Z$.}\] 
\end{lem}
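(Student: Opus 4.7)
The plan is to prove both implications via scaling arguments that exploit linearity of $T$. The key algebraic identity is this: if $\{x_n\}$ and $\{y_n\}$ are sequences satisfying $x_{n+1} = Tx_n + z_n$ and $y_{n+1} = Ty_n + z_n$ for the same forcing sequence $\{z_n\}$, then $w_n := x_n - y_n$ obeys $w_{n+1} = Tw_n$, so $\{w_n\}$ is a genuine orbit $T^n w_0$. This identity is exactly what allows the bounded-solution formulation and the shadowing formulation to be translated into each other.

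For the forward direction, assume $T$ has the shadowing property and apply it with $\epsilon = 1$ to obtain some $\delta > 0$. Given a bounded $\{z_n\}_{n\in\mathbb{Z}}$ with $M := \sup_n \|z_n\|$ (the case $M=0$ being trivial), I would construct a $\delta$-pseudotrajectory by setting $x_0 = 0$ and iterating $x_{n+1} = Tx_n + \tfrac{\delta}{M} z_n$ in both directions, using invertibility of $T$ to propagate to $n < 0$. Since $\|Tx_n - x_{n+1}\| = \tfrac{\delta}{M}\|z_n\| \leq \delta$, shadowing produces $x \in X$ with $\|T^n x - x_n\| < 1$. Rescaling by $\tfrac{M}{\delta}$, the sequence $y_n := \tfrac{M}{\delta}\bigl(x_n - T^n x\bigr)$ then satisfies $y_{n+1} = Ty_n + z_n$ together with $\|y_n\| \leq \tfrac{M}{\delta} = KM$, so $K = 1/\delta$ works.

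For the reverse direction, assume such a constant $K$ exists, fix $\epsilon > 0$, and set $\delta = \epsilon/K$. Let $\{x_n\}$ be any $\delta$-pseudotrajectory and put $z_n := x_{n+1} - Tx_n$, so $\|z_n\| \leq \delta$. The hypothesis applied to this $\{z_n\}$ yields $\{y_n\}$ with $y_{n+1} = Ty_n + z_n$ and $\sup_n \|y_n\| \leq K\delta = \epsilon$. By the identity of the first paragraph, $w_n := x_n - y_n$ is a genuine orbit $T^n w_0$, and $\|T^n w_0 - x_n\| = \|y_n\| \leq \epsilon$ gives the $\epsilon$-shadowing (a minor tweak such as replacing $\delta$ by $\delta/2$ recovers the strict inequality in the definition).

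I do not anticipate a serious obstacle: the entire lemma reduces to the algebraic identity that differences of solutions of the inhomogeneous equation $u_{n+1} = Tu_n + z_n$ are genuine orbits, plus homogeneity of the norm to absorb the scale $M$ of the data. The only mild points requiring attention are the degenerate case $M = 0$ and the use of invertibility of $T$ to extend the constructed pseudotrajectory to negative indices, both of which are built into the hypotheses.
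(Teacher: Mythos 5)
Your proof is correct. The paper itself gives no proof of this lemma --- it is quoted as a known result from Pilyugin's book --- but your argument (the observation that differences of solutions of $u_{n+1}=Tu_n+z_n$ are genuine orbits, combined with rescaling by $\delta/M$ and its inverse) is precisely the standard derivation of this reformulation, and both directions, including the degenerate case $M=0$ and the use of invertibility to build the pseudotrajectory at negative indices, are handled correctly.
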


In \cite{BernardesMessaoudiETDS2020}, Bernardes and Messaoudi establish the following characterization of shadowing for bilateral weighted backward shifts. 

\begin{thm} [\cite{BernardesMessaoudiETDS2020}: Theorem 18] \label{theoSHADBW}
Let  $X=\ell^p({\mathbb Z})$ $(1 \leq p < \infty)$ or $X=c_0({\mathbb Z})$ and consider a bounded weight sequence  $w=\{w_n\}_{n \in \mathbb Z}$ with $\inf _{n \in \mathbb Z} \vert w_n \vert >0$. Then, the bilateral weighted backward shift $B_w:X \longrightarrow X$ has the shadowing property if and only if one of the following conditions holds:
\begin{itemize}
\item[a)]{$\lim_{n \rightarrow \infty}( \sup _{k \in {\mathbb Z}} \vert w_k \cdots w_{k+n}\vert ^{\frac{1}{n}} )<1;$}
\item[b)]{$\lim_{n \rightarrow \infty}( \inf_{k \in {\mathbb Z}} \vert w_{k} \cdots w_{k+n}\vert^{\frac{1}{n}} )>1;$}
\item[c)]{$\lim_{n \rightarrow \infty}(\sup _{k \in {\mathbb N}} \vert w_{-k} \cdots w_{-k-n}\vert^{\frac{1}{n}} )<1$ and \\ $\lim_{n \rightarrow \infty}(\inf_{k \in {\mathbb N}} \vert w_{k} \cdots w_{k+n}\vert^{\frac{1}{n}} )>1.$  }
\end{itemize}
\end{thm}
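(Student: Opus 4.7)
The plan is to invoke Lemma~\ref{LEM1} and recast the shadowing property as the \emph{uniform solvability} statement: there exists $K>0$ such that for every bounded sequence $\{z_n\}_{n\in\mathbb{Z}}\subset X$ one can find $\{y_n\}_{n\in\mathbb{Z}}\subset X$ with $\sup_n\|y_n\|\le K\sup_n\|z_n\|$ and $y_{n+1}=B_w y_n + z_n$ for all $n$. Using the explicit action $B_w e_k = w_k e_{k-1}$, one has $\|B_w^n\| = \sup_k|w_k\cdots w_{k+n-1}|$ and $\|B_w^{-n}\| = 1/\inf_k|w_{k+1}\cdots w_{k+n}|$, which translates (a)--(c) into quantitative statements about $\|B_w^{\pm n}\|$, either globally or after restriction to the natural invariant subspaces built from the basis $\{e_k\}$.

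For the sufficient direction I would treat each case by an absolutely convergent series. Under (a), the spectral radius satisfies $r(B_w)<1$ and $\|B_w^j\|\le C\rho^j$ for some $\rho<1$, so $y_n:=\sum_{j=0}^\infty B_w^j z_{n-1-j}$ is well defined, uniformly bounded by $\frac{C}{1-\rho}\sup_n\|z_n\|$, and telescopes immediately to $y_{n+1}=B_w y_n + z_n$. Under (b), the analogous argument applied to $B_w^{-1}$ (which now has spectral radius $<1$) gives $y_n:=-\sum_{j=1}^\infty B_w^{-j} z_{n-1+j}$. Under (c), I would split $X=X^-\oplus X^+$ into the closed linear spans of $\{e_k\}_{k<0}$ and $\{e_k\}_{k\ge 0}$; the two halves of (c) yield that $B_w$ is eventually contracting on $X^+$ and that $B_w^{-1}$ is eventually contracting on $X^-$, after which decomposing $z_n=z_n^++z_n^-$ and applying the strategies of (a) and (b) to the respective components produces a bounded solution.

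For the necessary direction I would argue by contraposition. Assuming none of (a), (b), (c) holds, the failures of (a) and (b) yield families of indices $k_m\in\mathbb{Z}$ and lengths $n_m\to\infty$ along which $|w_{k_m}\cdots w_{k_m+n_m}|^{1/n_m}$ does not drop strictly below $1$ (failure of (a)) nor rise strictly above $1$ (failure of (b)), and negating (c) further restricts on which side of $\mathbb{Z}$ these bad indices can be placed. At such locations I would set $z_n=c_n e_{j_n}$ for carefully chosen scalars $c_n$ and basis indices $j_n$, and use $B_w^j e_k=(w_k\cdots w_{k-j+1})e_{k-j}$ to show that every formal candidate $\{y_n\}$ satisfying the recursion has coordinates driven by the bad weight products and is therefore unbounded despite $\{z_n\}$ being bounded, contradicting Lemma~\ref{LEM1}. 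The main obstacle is precisely the combinatorics of this last step: the negation of (c) decomposes into several subcases and one must locate the bad indices on the correct side of $\mathbb{Z}$ relative to each subcase so that a \emph{single} pseudotrajectory simultaneously witnesses failure of shadowing across all of them; once the indices are selected, the blow-up computation reduces to routine estimates.
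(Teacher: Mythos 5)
This statement is not proved in the paper: it is imported verbatim from Bernardes and Messaoudi (Theorem~18 of \cite{BernardesMessaoudiETDS2020}) and used as a black box, notably in the proof of Theorem~\ref{thmSN}. So the comparison below is against the statement itself rather than an internal proof. Your sufficiency half is the standard argument and is essentially sound: under (a) the series $y_n=\sum_{j\ge 0}B_w^{j}z_{n-1-j}$ converges geometrically and telescopes, under (b) the analogous series in $B_w^{-1}$ does, and under (c) one superposes the two on complementary halves of the basis. One concrete correction there: with $B_we_k=w_ke_{k-1}$ the indices move \emph{downward}, so the first half of (c) (products $w_{-k}\cdots w_{-k-n}$) makes $B_w$ eventually contracting on $X^-=\overline{\mathrm{span}}\{e_k:k<0\}$, which is the $B_w$-invariant half, while the second half makes $B_w^{-1}$ eventually contracting on $X^+=\overline{\mathrm{span}}\{e_k:k\ge 0\}$. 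You have assigned the roles the other way around, and as written the claim does not even parse, since $B_w(X^+)\not\subseteq X^+$ (indeed $B_we_0=w_0e_{-1}$). Compare the splitting $M,N$ used in the paper right after this theorem, where the stable part is supported on negative coordinates. This is fixable, but it must be fixed.

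The genuine gap is the necessity direction, which is the hard half of the theorem. What you offer there is a plan, not a proof: you propose to place impulses $z_n=c_ne_{j_n}$ at ``bad'' indices and argue that every solution of $y_{n+1}=B_wy_n+z_n$ blows up, and you yourself flag the selection of those indices as an unresolved obstacle. That selection is precisely where the content lies. Failure of (a) and (b) only says that $\lim_n\sup_k|w_k\cdots w_{k+n}|^{1/n}\ge 1$ and $\lim_n\inf_k|w_k\cdots w_{k+n}|^{1/n}\le 1$; to contradict Lemma~\ref{LEM1} with constant $K$ you must produce, for each $K$, a single bounded input (typically a one-impulse one, $z_n=\delta_{n,0}e_j$) such that \emph{every} solution is larger than $K$. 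Since any solution of the one-impulse equation amounts to a decomposition $e_j=u+v$ with $T^nu=y_{n+1}$ and $T^{-n}v$ controlled, and at least one of $u,v$ has $e_j$-coordinate of modulus $\ge 1/2$, one needs an index $j$ for which simultaneously $\sup_n|w_j\cdots w_{j-n+1}|>2K$ and $\sup_n|w_{j+1}\cdots w_{j+n}|^{-1}>2K$; extracting such $j$ from the joint failure of (a), (b) and (c) requires the case analysis over which side of $\mathbb{Z}$ the bad blocks occupy that you explicitly defer. Until that construction is carried out, the ``only if'' implication remains unproved.
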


%@@@@@@@@@@@@@
\subsection{Hyperbolicity and Generalized Hyperbolicity}
%@@@@@@@@@@@@@

A fundamental notion in linear dynamics is that of {\em hyperbolicity}.

\begin{defn}
An invertible operator $T$ is said to be {\em hyperbolic} if $\sigma (T) \cap {\mathbb T}= \emptyset.$
\end{defn}

It is known \cite{EisenbergHedlundPJM1970,HedlundPJM1971} that $T$ is uniformly expansive if and only if $\sigma_a(T)\cap {\mathbb T}=\emptyset$. Hence, every invertible hyperbolic operator is uniformly expansive and the converse, in general,  is not true \cite{BernardesCiriloDarjiMessaoudiPujalsJMAA2018, EisenbergHedlundPJM1970}. 

There is an equivalent useful formulation of hyperbolic operator which does not use the spectrum of the operator. 
It is classical that $T$ is hyperbolic if and only if there is a  splitting $X = X_s \oplus X_u$,
$T = T_s \oplus T_u$ (the {\em hyperbolic splitting} of $T$),
where $X_s$ and $X_u$ are closed $T$-invariant subspaces of $X$
(the {\em stable} and the {\em unstable subspaces} for $T$),
$T_s = T_{|_{X_s}}$ is a {\em proper contraction} (i.e., $\|T_s\| < 1$),
$T_u = T_{|_{X_u}}$ is invertible and it is a {\em proper dilation}
(i.e., $\|T_u^{-1}\| < 1$).

The above reformulation of hyperbolicity and the negative solution of the problem whether every operator with the shadowing property is hyperbolic (\cite{BernardesCiriloDarjiMessaoudiPujalsJMAA2018}: Theorem~B) led to the following notion of generalized hyperbolicity \cite{BernardesCiriloDarjiMessaoudiPujalsJMAA2018, BernardesMessaoudiETDS2020, CiriloGollobitPujals2020}.

\begin{defn} [\cite{CiriloGollobitPujals2020}: Definition 1] \label{DEFGH}
Let $T$ be an invertible operator on a Banach space $X$. If $X=M\oplus N$, where $M$ and $N$ are closed subspaces of $X$ with $T(M)\subset M$ and $T^{-1}(N) \subset N$,  and $T_{|_M}$ and $T^{-1}_{|_N}$ are proper contractions, then $T$ is said to be {\em generalized hyperbolic}.
\end{defn}
The following corollary ties this new concept to the shadowing property.
\begin{cor}[\cite{BernardesCiriloDarjiMessaoudiPujalsJMAA2018}: Corollary 8] \label{Cor1}  
Every invertible generalized hyperbolic operator $T$ on a Banach space $X$ has the  shadowing property.
\end{cor}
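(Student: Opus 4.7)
The plan is to apply Lemma \ref{LEM1}: given a bounded sequence $\{z_n\}_{n \in \mathbb Z}$ in $X$, I want to construct a bounded sequence $\{y_n\}_{n \in \mathbb Z}$ satisfying $y_{n+1}=Ty_n+z_n$ with $\sup_n\|y_n\|\le K\sup_n\|z_n\|$ for some constant $K$ independent of $\{z_n\}$. The splitting $X=M\oplus N$ suggests handling the stable and unstable parts separately.

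First, since $M$ and $N$ are closed and $X=M\oplus N$ as a topological direct sum, the associated projections $P_M$ and $P_N$ are bounded; set $C=\max\{\|P_M\|,\|P_N\|\}$. Writing $z_n=z_n^M+z_n^N$ with $z_n^M=P_Mz_n\in M$ and $z_n^N=P_Nz_n\in N$, I will build $y_n^M\in M$ and $y_n^N\in N$ separately and then set $y_n=y_n^M+y_n^N$. Since $T(M)\subset M$ and $T^{-1}(N)\subset N$, the invariance forces the $M$- and $N$-components of $y_{n+1}=Ty_n+z_n$ to decouple into the two equations $y_{n+1}^M=(T|_M)y_n^M+z_n^M$ and $y_{n+1}^N=(T|_N)y_n^N+z_n^N$, where $T|_N$ is invertible (since $T^{-1}(N)\subset N$ forces $T(N)=N$). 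Let $a=\|T|_M\|<1$ and $b=\|(T|_N)^{-1}\|<1$.

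For the stable part I will use the backward convolution
\[
y_{n}^M=\sum_{k=0}^{\infty}(T|_M)^{k}\,z_{n-1-k}^M,
\]
which converges in $M$ since $\|(T|_M)^k\|\le a^k$ and $\sum a^k<\infty$. A direct computation shows $y_{n+1}^M-(T|_M)y_n^M=z_n^M$, and $\|y_n^M\|\le \tfrac{1}{1-a}\sup_n\|z_n^M\|\le \tfrac{C}{1-a}\sup_n\|z_n\|$. For the unstable part I will symmetrically use the forward convolution
\[
y_{n}^N=-\sum_{k=0}^{\infty}(T|_N)^{-(k+1)}\,z_{n+k}^N,
\]
which converges in $N$ since $\|(T|_N)^{-(k+1)}\|\le b^{k+1}$. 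Again a direct check yields $y_{n+1}^N-(T|_N)y_n^N=z_n^N$, with $\|y_n^N\|\le \tfrac{b}{1-b}\sup_n\|z_n^N\|\le \tfrac{bC}{1-b}\sup_n\|z_n\|$.

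Adding the two parts gives a bounded solution with $\sup_n\|y_n\|\le K\sup_n\|z_n\|$, where $K=C\bigl(\tfrac{1}{1-a}+\tfrac{b}{1-b}\bigr)$, so Lemma \ref{LEM1} applies and $T$ has the shadowing property. There is no real obstacle here: the main points are (i) invoking the topological direct sum to obtain bounded projections so that the bounds on $\{z_n^M\},\{z_n^N\}$ follow from the bound on $\{z_n\}$, and (ii) choosing the correct convolution direction on each factor so that the geometric series from $\|T|_M\|<1$ and $\|(T|_N)^{-1}\|<1$ governs convergence.
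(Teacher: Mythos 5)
Your proof is correct, and it is essentially the standard argument from the cited reference \cite{BernardesCiriloDarjiMessaoudiPujalsJMAA2018}: the paper itself offers no proof of this corollary (it is quoted as Corollary 8 of that work), and the proof there runs exactly as yours does, via Lemma \ref{LEM1} and the two one-sided geometric series on the stable and unstable summands. One small inaccuracy worth fixing: $T^{-1}(N)\subset N$ does \emph{not} force $T(N)=N$; it only gives $N\subset T(N)$ (for the canonical weighted-shift example with $N=\{x: x_n=0,\ n<0\}$ one has $T(N)\supsetneq N$). Consequently the two coordinate equations do not decouple \emph{a priori}, since $Ty_n^N$ could in principle have an $M$-component. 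Your argument survives because you never actually use the decoupling: the series for $y_n^N$ is built from $T^{-1}|_N$, which does map $N$ into $N$ and has norm $b<1$ by hypothesis, the telescoping identity $y_{n+1}^N=Ty_n^N+z_n^N$ is verified directly using only $T\circ T^{-1}=\mathrm{id}$ (and incidentally shows $Ty_n^N=y_{n+1}^N-z_n^N\in N$ for this particular choice), and adding the two recurrences gives $y_{n+1}=Ty_n+z_n$ with the stated bound. So the proof is sound once the parenthetical about $T(N)=N$ is deleted and $(T|_N)^{-1}$ is read as $T^{-1}|_N$.
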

 It was long known that hyperbolicity and the shadowing properties are equivalent for special cases such as  in finite dimensional Banach spaces and for normal operators on Hilbert spaces \cite{MazurFDE2000, OmbachUIAM1994}. Recently, Bernardes and Messaoudi \cite{BernardesMessaoudiETDS2020} gave the precise conditions when they are equivalent.
\begin{thm}[\cite{BernardesMessaoudiETDS2020}: Theorem 1]
For any invertible operator $T$ on a Banach space $X$, the following are equivalent:
\begin{enumerate}
\item{$T$ is hyperbolic;}
\item{T is expansive and has the shadowing property.}
\end{enumerate}
\end{thm}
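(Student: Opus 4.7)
My plan is to prove the two directions separately. The direction $(1)\Rightarrow(2)$ is a quick consequence of earlier results in the excerpt: a hyperbolic splitting $X=X_s\oplus X_u$ fits Definition~\ref{DEFGH} with $M=X_s$ and $N=X_u$, so Corollary~\ref{Cor1} yields the shadowing property, and $\sigma(T)\cap\T=\emptyset$ forces $\sigma_a(T)\cap\T=\emptyset$, which by the Eisenberg--Hedlund characterization cited in the excerpt makes $T$ uniformly expansive, and in particular expansive.

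For the nontrivial direction $(2)\Rightarrow(1)$, I would fix $\lambda\in\T$ and show that $\lambda I-T$ is both bounded below and has dense range; closed range plus dense range then forces bijectivity, giving $\lambda\notin\sigma(T)$ and hence hyperbolicity. To see $\lambda\notin\sigma_a(T)$, suppose for contradiction there are $x_k\in S_X$ with $\|Tx_k-\lambda x_k\|\to 0$. Fix $\epsilon\in(0,1)$ with $(1+\epsilon)/(1-\epsilon)<2$ and let $\delta$ be the shadowing parameter for $\epsilon$; for $k$ large, $y_m:=\lambda^m x_k$ is a $\delta$-pseudotrajectory of unit vectors, since $\|Ty_m-y_{m+1}\|=\|Tx_k-\lambda x_k\|<\delta$. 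Shadowing yields $x\in X$ with $\|T^m x-y_m\|<\epsilon$ for all $m\in\Z$, giving $1-\epsilon\le\|T^m x\|\le 1+\epsilon$; then $z=x/\|x\|\in S_X$ satisfies $\sup_{m\in\Z}\|T^m z\|\le(1+\epsilon)/(1-\epsilon)<2$, contradicting expansivity. To see that $\mathrm{range}(\lambda I-T)$ is dense, fix $z\in X$ and apply Lemma~\ref{LEM1} to $z_n=\lambda^n z$ to obtain bounded $\{y_n\}$ with $y_{n+1}=Ty_n+\lambda^n z$. Set $u_n=\lambda^{-n}y_n$ (still bounded since $|\lambda|=1$) and rewrite the recursion as $(\lambda I-T)u_n=z+\lambda(u_n-u_{n+1})$. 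Cesaro-averaging telescopes the right-hand side:
\[
(\lambda I-T)\!\left(\frac{1}{N}\sum_{n=0}^{N-1}u_n\right)=z+\frac{\lambda(u_0-u_N)}{N}\xrightarrow{N\to\infty}z,
\]
so $z\in\overline{\mathrm{range}(\lambda I-T)}$, whence density of the range.

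The main obstacle is this density-of-range step: converting the bounded-solution form of shadowing in Lemma~\ref{LEM1} into preimages under $\lambda I-T$, for which the Cesaro averaging is the key device. A cleaner alternative uses duality: if $\lambda I-T$ had non-dense range then $\overline{\lambda}$ would be an eigenvalue of $T^*$ with some $\phi\in X^*\setminus\{0\}$, and feeding $z_n=\overline{\lambda}^n z$ into Lemma~\ref{LEM1} would force $\lambda^n\phi(y_n)$ to be a bounded arithmetic progression with common difference $\lambda\phi(z)$; this forces $\phi(z)=0$ for every $z\in X$, contradicting $\phi\ne 0$.
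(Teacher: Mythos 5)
The paper does not actually prove this theorem: it is quoted verbatim from Bernardes and Messaoudi with a citation, so there is no in-paper argument to compare yours against. Your proof is correct and self-contained, and it is essentially the standard argument. The easy direction is exactly as you say: a hyperbolic splitting is a generalized hyperbolic splitting, so Corollary~\ref{Cor1} gives shadowing, and $\sigma_a(T)\subseteq\sigma(T)$ plus the Eisenberg--Hedlund characterization gives uniform expansivity. For the converse you correctly identify that bounded-below-ness of $\lambda I-T$ (which you extract from expansivity via the constant-norm pseudotrajectory $y_m=\lambda^m x_k$) is not enough, since $\T$ could lie in the interior of the spectrum; the density-of-range step via Lemma~\ref{LEM1} and Ces\`aro averaging of $u_n=\lambda^{-n}y_n$ closes exactly that gap, and together with closed range it yields invertibility of $\lambda I-T$ for every $\lambda\in\T$. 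Two trivial remarks: in the expansivity step you should note $\|x\|\geq 1-\epsilon>0$ before normalizing (you implicitly do); and in the duality variant, with the Banach-space adjoint convention $T^*\phi=\phi\circ T$ the relevant eigenvalue equation is $T^*\phi=\lambda\phi$ rather than $\overline{\lambda}\phi$, which changes nothing in the telescoping argument.
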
 

Returning back to invertible bilateral weighted shifts, we summarize the known results in the following characterizations of hyperbolicity and generalized hyperbolicity. 

\begin{thm}
Let $X=\ell^p({\mathbb Z})$ $(1 \leq p < \infty )$ or $X=c_0({\mathbb Z}),$ and consider a weight sequence $w=\{w_n\}_{n \in \mathbb Z}$ with $\inf _{n \in {\mathbb Z}} \vert w_n \vert >0.$ Then,
\begin{enumerate}
    \item   $B_w$ is hyperbolic if and only if a) or b) of Theorem~\ref{theoSHADBW} are satisfied.
\item $B_w$ is generalized hyperbolic if and only if  it  has the shadowing property. 
\end{enumerate}
\end{thm}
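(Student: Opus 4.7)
The plan is to address parts (1) and (2) separately. For (1), the key tool is the classical description of the spectrum of an invertible bilateral weighted shift as a closed annulus $\{z\in\mathbb{C} : r_1\leq|z|\leq r_2\}$ centered at the origin. A direct computation from $B_w^n e_k = (w_{k-n+1}\cdots w_k)\,e_{k-n}$ gives $\|B_w^n\|=\sup_{k\in\Z}|w_k\cdots w_{k+n-1}|$, so the spectral radius formula yields $r_2 = r(B_w)=\lim_n\sup_k|w_k\cdots w_{k+n-1}|^{1/n}$. Applying the same reasoning to $B_w^{-1}$, together with $\sigma(B_w^{-1})=\{1/z:z\in\sigma(B_w)\}$, gives $r_1=\lim_n\inf_k|w_k\cdots w_{k+n-1}|^{1/n}$; rotational symmetry of the spectrum (via conjugation by the diagonal unitaries $e_n\mapsto\lambda^n e_n$) shows it fills the full annulus. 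Hyperbolicity $\sigma(B_w)\cap\T=\emptyset$ then translates to $r_2<1$ (condition (a) of Theorem~\ref{theoSHADBW}) or $r_1>1$ (condition (b)).

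For (2), the forward direction is Corollary~\ref{Cor1}. For the converse, I would apply Theorem~\ref{theoSHADBW}. If (a) or (b) holds, $B_w$ is hyperbolic by (1) and the hyperbolic splitting $X=X_s\oplus X_u$ trivially serves as a generalized hyperbolic splitting. The substantive case is (c): I would take $M=\overline{\mathrm{span}}\{e_n:n\leq 0\}$ and $N=\overline{\mathrm{span}}\{e_n:n\geq 1\}$, so that $X=M\oplus N$ and $B_w(M)\subset M$, $B_w^{-1}(N)\subset N$ follow from $B_w e_n = w_n e_{n-1}$. The first inequality in (c), after re-indexing, gives $\|B_w^n|_M\|^{1/n}\to r<1$, and the second gives $\|B_w^{-n}|_N\|^{1/n}\to r'<1$, so the restrictions are eventually contracting. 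Passing to the equivalent norm $\|x\|'=\sup_{n\geq 0}\rho^{-n}\|B_w^n x\|$ on $M$ (for any $r<\rho<1$), and the analogous Lyapunov norm on $N$ built from $B_w^{-1}$, upgrades these to proper contractions in the sense required by Definition~\ref{DEFGH}.

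The main obstacle is the book-keeping in case (c): reconciling the precise index range of condition (c) (which involves only products indexed by $\N$, thus missing the ``boundary'' weight $w_0$) with the operator norms $\|B_w^n|_M\|$ and $\|B_w^{-n}|_N\|$, and then invoking the Lyapunov renorming to promote asymptotic exponential contraction to the literal inequality $\|T|_M\|<1$ demanded by the definition of generalized hyperbolicity.
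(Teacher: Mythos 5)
Your proposal is correct and follows essentially the same route as the paper: part (1) rests on the classical annulus description of $\sigma(B_w)$ (which the paper simply cites via Remark~35 of Bernardes et al.), and part (2) combines Corollary~\ref{Cor1} with Theorem~\ref{theoSHADBW} and the splitting of $X$ into negatively and non-negatively supported sequences --- the same $M,N$ as the paper up to a shift of one index --- with your Lyapunov renorming supplying the detail the paper elides with ``it can be easily shown.'' One small caveat: rotation-invariance of $\sigma(B_w)$ by itself only shows the spectrum is a union of circles, so in the case $r_1<1<r_2$ you still need the full classical theorem (the spectrum of an invertible bilateral weighted shift is the \emph{entire} closed annulus, or at least is connected) to conclude $\sigma(B_w)\cap\T\neq\emptyset$.
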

\begin{proof}
Statement (1) is rather well-known. For example, see Remark 35 in \cite{BernardesCiriloDarjiMessaoudiPujalsJMAA2018}.
For Statement (2) we have already discussed above that generalized hyperbolic operators have the shadowing property. If a bilateral weighted backward shift has the shadowing property, then, using Theorem~\ref{theoSHADBW}, it can be easily shown that $B_w$ has a splitting as in Definition~\ref{DEFGH}, namely, we let 
\begin{align*}
   M &=  \{ \{x_n\}_{n \in \Z} \in \ell^p({\mathbb Z}): x_n = 0 \ \ \forall n \ge 0\} \\
   N &= \{ \{x_n\}_{n \in \Z} \in \ell^p({\mathbb Z}): x_n = 0 \ \ \forall n <0\}.
\end{align*}
\end{proof} 
We put these concepts in the following diagram to have a clear picture of the relationships between them. 
\[
\begin{tikzcd}
T \text{ hyperbolic}  \arrow[d, Rightarrow] \arrow[r, Rightarrow] &  T \text{  generalized hyperbolic} \arrow[d, Rightarrow] \\
T  \text{  unif. expansive} \arrow[d, Rightarrow] & T \text{ shadowing} \\
T \text{  expansive} \\
\end{tikzcd}
\]

%@@@@@@@@@@@@@
\subsection{Composition Operators}
%@@@@@@@@@@@@@
Our goal is to investigate the notions of generalized hyperbolicity and the shadowing property in the context of composition operators on $L^p$-spaces. We use the basic set up from \cite{BayartDarjiPiresJMAA2018, BernardesDarjiPiresMM2020}.

\begin{defn}\label{compodyn}
A {\em composition dynamical system} is a quintuple $(X,{\mathcal B},\mu, f, T_f)$ where
\begin{enumerate}
     \item $(X,{\mathcal B},\mu)$ is a $\sigma$-finite measure space, 
    \item $f : X \to X$ is an injective {\em bimeasurable transformation},
i.e., $f(B) \in {\mathcal B}$ and $f^{-1}(B) \in {\mathcal B}$ for every $B \in {\mathcal B}$,
\item there is  $c > 0$ such that
\begin{equation}\label{condition}
   \mu(f^{-1}(B)) \leq c \mu(B) \ \textrm{ for every } B \in {\mathcal B},
   \tag{$\star$}
\end{equation}
\item $T_f: L^p(X) \rightarrow L^p(X) $, $1 \le p <\infty$, is the {\em composition operator} induced by $f$, i.e.,
\[T_f : \varphi \mapsto \varphi \circ f.\] 
\end{enumerate}
\end{defn}
It is well-known that (\ref{condition}) guarantees that $T_f$ is a bounded linear operator. Moreover, if $f$ is surjective and $f^{-1}$ satisfies (\ref{condition}), then $T_{f^{-1}}$ is a well-defined bounded linear operator and $T^{-1}_f = T_{f^{-1}}$. We refer the reader to the book \cite{SinghManhas1993} for a detailed exposition on composition operators. 

%@@@@@@@@@@@@@
\subsection{Dissipative Systems and Bounded Distortion}
%@@@@@@@@@@@@@
Characterizing the shadowing property  and generalized hyperbolicity for composition operators seems complicated.  We are able to give an explicit characterization in the setting of a dissipative measure space. Even in this setting we need an additional condition. Below we give relevant definitions and recall how dissipative systems naturally arise from Hopf decomposition of general measurable systems. Throughout this paper all  measure spaces are $\sigma$-finite.

\begin{defn}\label{nullNS} A measurable transformation $f: X \rightarrow X$ on the measure space $(X, {\mathcal B}, \mu)$ is called {\em nonsingular} if, for any $B \in \mathcal B$, $\mu(f^{-1}(B))=0$ if and only if $\mu(B)=0$.
\end{defn}
We point out here that, if $f$ and $f^{-1}$ satisfy (\ref{condition}), then $f$ is nonsingular. Now we recall the Hopf Decomposition Theorem. 

\begin{thm}[Hopf, \cite{AaronsonMSM1997,Krengel1985}]
Let  $(X, {\mathcal B}, \mu)$ be a measure space and $f: X \rightarrow X$ be a nonsingular transformation. Then, $X$ is the union of two disjoint invariant sets ${\mathcal C}(f)$ and ${\mathcal D}(f)$, called the conservative and dissipative parts of $f$, respectively, satisfying the following conditions.
\begin{enumerate}
    \item For all $B \subseteq {\mathcal C}(f)$ with $\mu(B) >0$, there is $n >0$ such that $\mu(B \cap f^{-n}(
    B)) >0$.
    \item  ${\mathcal D}(f)$ is the pairwise disjoint union of $\{f^n(W)\}_{n \in \Z}$ for some $W \in {\mathcal B}$ i.e., ${\mathcal D}(f)= \dot{\cup}_{k=-\infty}^{+ \infty} f^k (W)$.
\end{enumerate}
\end{thm}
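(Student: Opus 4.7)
The plan is to construct $\mathcal{D}(f)$ as the full bilateral $f$-orbit of a maximal \emph{wandering set}, where a measurable $W \subseteq X$ is called wandering if $\{f^n(W)\}_{n \in \Z}$ are pairwise disjoint modulo null sets; nonsingularity and bimeasurability of $f$ guarantee that this notion is well behaved with respect to null modifications and that each $f^n(W)$ is measurable. Then $\mathcal{C}(f)$ will be the complement of $\mathcal{D}(f)$, and the real work lies in verifying the recurrence condition on $\mathcal{C}(f)$.

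For the existence of a maximal wandering set I would apply Zorn's lemma. Fix a finite measure $\nu$ equivalent to $\mu$, available by $\sigma$-finiteness. Given a chain of wandering sets $\mathcal{K}$ ordered by inclusion mod null, pick an increasing sequence $W_k \in \mathcal{K}$ with $\nu(W_k) \uparrow \sup_{W \in \mathcal{K}} \nu(W)$; the union $W^\ast := \bigcup_k W_k$ is still wandering, since for $n \neq m$,
\[
f^n(W^\ast) \cap f^m(W^\ast) \;=\; \bigcup_{k,\ell} f^n(W_k) \cap f^m(W_\ell) \;\subseteq\; \bigcup_j f^n(W_j) \cap f^m(W_j),
\]
which is null by the wandering property of each $W_j$. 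A quick chain comparison (splitting on whether $W \subseteq W_k$ or $W_k \subseteq W$ for some $k$) shows every $W \in \mathcal{K}$ is contained in $W^\ast$ mod null, so $W^\ast$ is an upper bound; Zorn yields a maximal wandering $W_0$. Setting $\mathcal{D}(f) := \bigsqcup_{n \in \Z} f^n(W_0)$ and $\mathcal{C}(f) := X \setminus \mathcal{D}(f)$ makes both sets $f$-invariant by construction and establishes (2) at once.

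The main obstacle is (1). Assume toward contradiction that some measurable $B \subseteq \mathcal{C}(f)$ has $\mu(B) > 0$ and $\mu(B \cap f^{-n}(B)) = 0$ for every $n \geq 1$. Invertibility and nonsingularity promote this to $\mu(B \cap f^n(B)) = 0$ for every nonzero $n \in \Z$, so discarding a null set produces a genuinely wandering $B' \subseteq B$ with $\mu(B') > 0$. Since $\mathcal{C}(f)$ is $f$-invariant and disjoint from $\mathcal{D}(f) \supseteq \bigcup_m f^m(W_0)$, each $f^k(B')$ avoids each $f^m(W_0)$; combined with the wandering property of $B'$ and $W_0$ separately, this forces $W_0 \sqcup B'$ to be a wandering set of strictly larger $\nu$-measure than $W_0$, contradicting maximality. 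The delicate point throughout is the passage between null-set equalities and genuine set-theoretic disjointness, which one controls by systematically replacing sets by measurable modifications that are exactly (not merely almost) wandering.
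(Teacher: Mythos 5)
The paper offers no proof of this statement: it is the classical Hopf Decomposition Theorem, quoted verbatim with citations to Aaronson and to Krengel, so there is no in-paper argument to measure yours against. Judged on its own, your proof is a correct rendition of the standard argument for invertible nonsingular transformations. The Zorn/exhaustion hybrid (fixing an equivalent finite measure $\nu$ and extracting an increasing sequence realizing the supremum over a chain) does produce an upper bound, and your chain comparison showing every $W\in\mathcal{K}$ lies in $W^\ast$ mod null is right. Setting $\mathcal{D}(f)=\bigcup_{n\in\Z}f^n(W_0)$ makes both $\mathcal{D}(f)$ and $\mathcal{C}(f)$ invariant because the full orbit of a set under a bijection is invariant, and the maximality contradiction establishing (1) works: the two cross terms $f^n(W_0)\cap f^m(B')$ are empty outright since they lie in $\mathcal{D}(f)\cap\mathcal{C}(f)=\emptyset$, so $W_0\cup B'$ is wandering and strictly larger mod null than $W_0$.

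Two points should be made explicit. First, the statement hypothesizes only that $f$ is nonsingular, while your argument uses bijectivity throughout (negative iterates, $f(\mathcal{D}(f))=\mathcal{D}(f)$, and the promotion of $\mu(B\cap f^{-n}(B))=0$ to $\mu(B\cap f^{n}(B))=0$). This matches the setting in which the paper actually invokes the theorem ($f$ injective and bimeasurable with $f^{-1}$ available), but you should state the assumption, since for genuinely non-invertible $f$ the dissipative part cannot in general be written as a single bilateral orbit. Second, the passage from ``wandering mod null'' to the exact pairwise disjointness demanded in (2) deserves one concrete line rather than a promissory note: given $\mu\bigl(f^j(W_0)\cap f^k(W_0)\bigr)=0$ for all $j\neq k$, delete from $W_0$ the set $N=\bigcup_{j\neq k}f^{-j}\bigl(f^j(W_0)\cap f^k(W_0)\bigr)$, which is null by (iterated) nonsingularity; the resulting $W_0\setminus N$ is exactly wandering, and the same device produces your $B'$. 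With these two clarifications the argument is complete.
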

In above, the set $W$ is called a {\em wandering set of $f$}, i.e., $\{f^n(W)\}_{n \in \Z}$ are pairwise disjoint. In general, $\mu(W)$  does not have to  be finite.  
Based on Hopf Decomposition Theorem, we use the following definition of dissipative suitable for our purpose.
\begin{defn} \label{dissipcompodyn}
A measurable dynamical system $(X,{\mathcal B},\mu, f)$ is called a {\em dissipative system} if $X = \dot {\cup} _{k=-\infty}^{+ \infty} f^k (W)$ for some  $W \in \B$ with $0 < \mu (W) < \infty$. We will often say that the system is {\em generated by $W$.} 
\end{defn}
We now introduce a special type of dissipative system involving the notion of bounded distortion. It occurs naturally in various places, e.g., see \cite{VianaOliveira2016}. In the sequel, we let ${\mathcal B}(W) =\{ B \cap W, B \in {\mathcal B} \}.$

\begin{defn} \label{defnBD}
Let $(X,{\mathcal B},\mu, f)$ be a dissipative system generated by $W$.  We say that $f$ is of {\em bounded distortion on $W$} if there exists $K>0$ such that
\begin{equation}\label{conditionbd}
 \dfrac{1}{K} \mu(f^k(W))\mu(B) \leq \mu(f^k (B))\mu (W) \leq K \mu(f^k(W))\mu(B), \tag{$\Diamond$}
\end{equation}
for all $k \in \mathbb Z$ and  $B \in {\mathcal B}(W)$. 
In the case of above, we will say that $(X,{\mathcal B},\mu, f)$ is a {\em dissipative system of bounded distortion.}
\end{defn}
\begin{prop} \label{diststar}
Let $(X,{\mathcal B},\mu, f)$ be a dissipative system of bounded distortion generated by $W$. Then, the following are true.
\begin{enumerate}
    \item  There is a constant $H>0$ such that, for all $B \in {\mathcal B}(W)$ with $\mu(B)> 0$ and all $s, t \in \Z$, we have
\begin{equation} \label{generalbd}
  \dfrac{1}{H} \dfrac{\mu(f^{t+s}(W))}{\mu(f^s(W))} \leq \dfrac{\mu(f^{t+s} (B))}{\mu (f^s(B))} \leq H \dfrac{\mu(f^{t+s}(W))}{\mu(f^s(W))}. \tag{$\Diamond \Diamond$} 
\end{equation} 
    \item If $\sup \left \{\frac{\mu(f^{k-1}(W))}{\mu(f^{k}(W))}, \frac{\mu(f^{k+1}(W))}{\mu(f^{k}(W))}: k \in {\Z} \right \}$ is finite, then $f$ and $f^{-1}$ satisfy Condition (\ref{condition}).
\end{enumerate}  
\end{prop}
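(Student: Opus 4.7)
The plan is to derive both parts directly from the bounded distortion inequality $(\Diamond)$: part (1) by a short algebraic manipulation of $(\Diamond)$ applied at two different iterates, and part (2) by combining part (1) with the wandering decomposition $X = \dot\cup_{k \in \Z} f^k(W)$ and the ratio-growth hypothesis.

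For (1), the key observation is that $(\Diamond)$ can be rearranged as
$$\frac{1}{K}\,\frac{\mu(B)}{\mu(W)} \;\leq\; \frac{\mu(f^k(B))}{\mu(f^k(W))} \;\leq\; K\,\frac{\mu(B)}{\mu(W)},$$
exhibiting $\mu(f^k(B))/\mu(f^k(W))$ as uniformly comparable, across all $k \in \Z$, to the single ratio $\mu(B)/\mu(W)$. Applying this once at $k = t+s$ and once at $k = s$ and taking the ratio of the two inequalities cancels the $\mu(B)/\mu(W)$ factor and yields
$$\frac{1}{K^2} \;\leq\; \frac{\mu(f^{t+s}(B))/\mu(f^{t+s}(W))}{\mu(f^s(B))/\mu(f^s(W))} \;\leq\; K^2,$$
which is exactly $(\Diamond\Diamond)$ with $H = K^2$. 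The hypothesis $\mu(B)>0$ together with nonsingularity of $f$ ensures every denominator is positive, so the division is legitimate.

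For (2), let $M$ denote the finite supremum in the hypothesis and let $B \in \mathcal{B}$ be arbitrary. The plan is to split $B$ along the wandering partition: set $B_k = B \cap f^k(W)$, so that $B = \dot\cup_{k \in \Z} B_k$, and let $A_k = f^{-k}(B_k)$, which lies in $\mathcal{B}(W)$ since injectivity of $f$ gives $A_k \subseteq f^{-k}(f^k(W)) = W$. Then $B_k = f^k(A_k)$, $f^{-1}(B_k) = f^{k-1}(A_k)$, and $f(B_k) = f^{k+1}(A_k)$. Applying part (1) to $A_k$ with $s = k$ and $t = \pm 1$ gives
$$\mu\bigl(f^{k\pm 1}(A_k)\bigr) \;\leq\; H\,\frac{\mu(f^{k\pm 1}(W))}{\mu(f^k(W))}\,\mu(f^k(A_k)) \;\leq\; HM\,\mu(B_k).$$
Because $f$ is an injective bimeasurable bijection and the family $\{f^k(W)\}_k$ is pairwise disjoint, so are $\{f^{k-1}(A_k)\}_k$ and $\{f^{k+1}(A_k)\}_k$; summing over $k$ by $\sigma$-additivity then yields $\mu(f^{-1}(B)) \leq HM\,\mu(B)$ and $\mu(f(B)) \leq HM\,\mu(B)$, so both $f$ and $f^{-1}$ satisfy $(\star)$ with constant $c = K^2 M$.

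Both parts are essentially clean manipulations of the hypotheses, so I do not anticipate a genuine technical obstacle. The only bookkeeping worth highlighting is in (2): the verification that $A_k \in \mathcal{B}(W)$ and that the shifted families $\{f^{k\pm 1}(A_k)\}_k$ remain pairwise disjoint, which is immediate from injectivity, bimeasurability, and the wandering property of $W$.
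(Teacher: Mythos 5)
Your argument is correct and follows essentially the same route as the paper: part (1) is the same algebraic rearrangement of $(\Diamond)$ with $H=K^2$, and part (2) is the same decomposition $B=\dot\cup_k (B\cap f^k(W))$ combined with $(\Diamond\Diamond)$ at $t=\pm 1$ and $\sigma$-additivity. The only point the paper treats separately and you elide is the case $\mu(B_k)=0$, where part (1) (stated only for sets of positive measure) cannot be invoked and one instead reads off $\mu(f^{k\pm1}(A_k))=0$ directly from $(\Diamond)$ --- a trivial fix.
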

\begin{proof}
To prove the first part, we note that by Condition (\ref{conditionbd}) we have that  $\mu(B) =0$ if and only if $\mu(f^k(B)) =0$ for all $k \in \Z$. Hence, Condition (\ref{generalbd}) is well-defined for all $s, t \in \Z$. Let $K$ be the constant associated with the fact that $f$ is of bounded distortion on $W$. Then, for each  $s,t \in \Z$
\begin{eqnarray*}
 \dfrac{\mu(f^{t+s}(W))}{\mu(f^s(W))} & = & \dfrac{\mu(f^{t+s}(W))}{\mu(W)} \dfrac{\mu (W)}{\mu(f^s(W))} \\
& \leq & K^2 \dfrac{\mu(f^{t+s}(B))}{\mu(B)} \dfrac{\mu (B)}{\mu(f^s(B))} \\
& = & K^2 \dfrac{\mu(f^{t+s}(B))}{\mu(f^s(B)),}\\
\end{eqnarray*} 
and, analogously on the other side, we have that $\dfrac{\mu(f^{t+s}(W))}{\mu(f^s(W))} \geq \dfrac{1}{K^2} \dfrac{\mu(f^{t+s}(B))}{\mu(f^s(B)).}$
Setting $H=K^2$ completes the proof of the first part. 

For the second part, we will show that $f$ satisfies Condition (\ref{condition}). The proof for $f^{-1}$ is analogous. Let $M=\sup \left \{\frac{\mu(f^{k-1}(W))}{\mu(f^{k}(W))}, \frac{\mu(f^{k+1}(W))}{\mu(f^{k}(W))}: k \in {\Z} \right \}$. Let $A \in {\mathcal B}$ and set $A_k = A \cap f^k(W)$. By the  countable additivity property of measures, it suffices to show Condition (\ref{condition}) for $A_k$. If $\mu (A_k)=0$, then applying  (\ref{conditionbd}) to $B = f^{-k}(A_k) \subseteq W$ and $k$, we have that $\mu(f^{-k}(A_k)) =0$. Hence $\mu(f^l (f^{-k}(A_k)) =0$ for all $l \in \Z$ and, in particular, $\mu (f^{-1}(A_k))= 0$. For $\mu (A_k) > 0$,  we apply the right side of Condition ({\ref{generalbd}}) to 
$B= f^{-k}(A _k) \subseteq W$, $s=k$ and $t=-1$, we obtain that 
\[\dfrac{\mu(f^{-1} (A_k))}{\mu(A_k)} \leq H \dfrac{\mu(f^{-1+k}(W))}{\mu(f^k(W))} \leq H M.\]
Letting $c = HM$, we have that   
\[\mu(f^{-1} (A_k)) \leq c \mu(A_k), \ \ \forall k \in \Z, \]
verifying Condition (\ref{condition}). 
\end{proof}
For the following, $\dfrac{d\mu(f^{k})}{d\mu}$ denotes the Radon-Nikodym derivative of $\mu(f^{k})$ with respect to $\mu$.
\begin{prop}[Bounded RN Condition] \label{PROPBRN} Let
$(X,{\mathcal B},\mu, f)$ be a dissipative  system generated by $W$. Let ${\rho}_{k} = \dfrac{d\mu(f^{k})}{d\mu}$, 
$m_{k} = \underset{ x \in W}{\mathrm{ess\,inf}} \  {\rho}_{k} (x) $, and    $M_{k} = \underset{ x \in W}{\mathrm{ess\,sup}} \  {\rho}_{k} (x)$. 
If 
$\left \{\frac{M_k}{m_k}    \right \} _{k \in \Z}$ is bounded,
then $f$ is of bounded distortion on $W$.
\end{prop}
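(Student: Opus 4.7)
The plan is to translate everything into integrals against the Radon--Nikodym derivatives $\rho_k$ and then read off the bounded distortion inequality directly from the essential bounds $m_k$ and $M_k$ on $W$. Since $\rho_k=\frac{d\mu(f^k)}{d\mu}$, by definition of the pushforward and the Radon--Nikodym theorem we have
\[
\mu(f^k(A))=\int_A \rho_k\,d\mu
\]
for every measurable $A$; in particular, for $B\in\mathcal B(W)$ (so $B\subseteq W$) both $\mu(f^k(B))$ and $\mu(f^k(W))$ are expressed as integrals of $\rho_k$ over subsets of $W$.

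The key two-sided estimate comes from restricting the essential bounds of $\rho_k$ to $W$: for every measurable $A\subseteq W$,
\[
m_k\,\mu(A)\ \le\ \int_A \rho_k\,d\mu\ \le\ M_k\,\mu(A).
\]
Applying this once to $A=B$ and once to $A=W$, I would multiply the upper bound for $\mu(f^k(B))$ by the lower bound for $\mu(W)/\mu(f^k(W))$ (and vice versa for the other direction). This gives, whenever $\mu(B)>0$,
\[
\frac{m_k}{M_k}\ \le\ \frac{\mu(f^k(B))\,\mu(W)}{\mu(f^k(W))\,\mu(B)}\ \le\ \frac{M_k}{m_k}.
\]
Setting $K:=\sup_{k\in\Z}\frac{M_k}{m_k}$, which is finite by hypothesis (and $\ge 1$), this is precisely the bounded distortion inequality $(\Diamond)$.

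It only remains to handle the degenerate case $\mu(B)=0$, in which $\int_B\rho_k\,d\mu=0$ forces $\mu(f^k(B))=0$, so both sides of $(\Diamond)$ vanish and the inequality is trivially satisfied. There is no real obstacle here: the only subtlety is making sure that the essential infimum/supremum of $\rho_k$ are taken over $W$ (which is exactly how $m_k,M_k$ are defined), so that the bounds apply uniformly to every $B\in\mathcal B(W)$ at once, with a single constant $K$ independent of $k$ and $B$.
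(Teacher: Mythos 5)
Your proposal is correct and follows essentially the same route as the paper: express $\mu(f^k(B))$ and $\mu(f^k(W))$ as $\int\rho_k\,d\mu$, bound each by $m_k$ and $M_k$ times the measure of the domain, and divide to get $(\Diamond)$ with $K=\sup_k M_k/m_k$, treating $\mu(B)=0$ as a trivial case. No gaps.
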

\begin{proof}
Let $K$ be a bound on
$\left \{\frac{M_k}{m_k}    \right \} _{k \in \Z}$. We prove that Condition $(\Diamond)$ holds. If $B \in {\mathcal B}(W)$ with $\mu(B) =0$, then Condition $(\Diamond)$ clearly holds as all Radon-Nikodym derivatives are bounded above. Hence, let us consider the case $\mu(B) >0$.
For every $k \in {\Z}$ and $B \in {\mathcal B}(W)$, 
\[\mu(f^{k}(B)) = \int_{B} {\rho}_{k} d \mu \leq \int_{B} M_k d \mu = M_k \mu(B) \]
and 
\[\mu(f^{k}(W)) =   \int_{W} {\rho}_{k} d \mu \geq  \int_{W} m_k   d \mu =   m_k \mu(W). \] 
Dividing the two inequalities we get
\[\frac{\mu(f^{k}(B))}{\mu(f^{k}(W))} \leq \frac{M_k}{m_k} \frac{\mu(B)}{\mu(W)} \le K \frac{\mu(B)}{\mu(W)} \]
and, on the other side,
\[\frac{\mu(f^{k}(B))}{\mu(f^{k}(W))} \geq \frac{m_k}{M_k} \frac{\mu(B)}{\mu(W)} \ge \frac{1}{K} \frac{\mu(B)}{\mu(W)}. \] Putting them together, we have that 
\[\frac{1}{K} \frac{\mu(f^k(W))}{\mu(W)} \leq \frac{\mu(f^{k}(B))}{\mu(B)} \leq K \frac{\mu(f^{k}(W))}{\mu(W)},\]
i.e., Condition $(\Diamond)$ holds.
\end{proof}

%##############################
\section{Main Results and Examples}
%###############################
%@@@@@@@@@@@@@
\subsection{Shadowing Results}
%@@@@@@@@@@@@@
Throughout this subsection, we assume that $T_f$ is a well-defined invertible operator, i.e., functions $f$ and $f^{-1}$ satisfy (\ref{condition}).

The following three conditions are essential in the description of characterization of the shadowing property.
As the formulas are long, we give them names to avoid writing them repeatedly. 
\begin{defn}
Let $(X,{\mathcal B},\mu, f)$ be a measurable  system.
We say that {\em Conditions $\hc{}$, $\hd{}$ and $\gh{}$ hold}, respectively, when the following are true: 
\begin{equation}\label{hc}
\uplim_{n \rightarrow \infty} \sup _{k \in {\mathbb Z}} {\left (\frac{\mu(f^{k}(W))}{\mu(f^{k+n}(W))}\right )}^{\frac{1}{n}} <1 \tag*{${\hc{}}$}   
\end{equation}
\begin{equation}\label{hd}
 \lowlim_{n \rightarrow \infty} \inf _{k \in {\mathbb Z}} {\left (\frac{\mu(f^{k}(W))}{\mu(f^{k+n}(W))} \right)}^{\frac{1}{n}} >1 \tag*{$\hd{}$} 
\end{equation}
\begin{equation}{\label{gh}
  \uplim_{n \rightarrow  \infty} \sup _{k \in -{\mathbb N}_{0}} { \left (\frac{\mu(f^{k-n}(W))}{\mu(f^{k}(W))} \right )}^{\frac{1}{n}} < 1 
  \ \  \& \\ 
\lowlim_{n \rightarrow  \infty} \inf_{k \in {\mathbb N_{0}} } {\left (\frac{\mu(f^{k}(W))}{\mu(f^{k+n}(W))}\right )}^{\frac{1}{n}} >1 \tag*{$\gh{}$}}
\end{equation}
\end{defn}

We begin by sufficient conditions on the measurable system $(X,{\mathcal B},\mu, f)$ which guarantee the shadowing property of $T_f$.
\begin{manualtheorem}{SS}[Shadowing Sufficiency]\label{thmSS}
Let $(X,{\mathcal B},\mu, f)$ be a dissipative system of bounded distortion generated by $W$. Then the following hold.
\begin{enumerate}
     \item If Condition~$\hc{}$ is satisfied, then $T_f$ is a contraction. 
    \item If Condition~$\hd{}$ is satisfied, then $T_f$ is a dilation.
    \item If Condition~$\gh{}$ is satisfied, then $T_f$ is a generalized hyperbolic operator.
    \end{enumerate} 
    Hence, $T_f$ has the shadowing property in all three cases.
\end{manualtheorem}
Below we prove the necessary condition for shadowing. 
\begin{manualtheorem}{SN}[Shadowing Necessity]\label{thmSN}
Let $(X,{\mathcal B},\mu, f)$ be a dissipative system of bounded distortion generated by $W$. If the composition operator $T_f$ has the shadowing property then one of conditions~$\hc{}$, $\hd{}$ or $\gh{}$ holds.
\end{manualtheorem}
Putting Theorems~\ref{thmSS} and Theorem~\ref{thmSN} together, we have the following characterization of shadowing. 
\begin{manualcor}{SC}[Shadowing Characterization]\label{CorSC}
Let $(X,{\mathcal B},\mu, f)$ be a dissipative system of bounded distortion generated by $W$.  Then the following are equivalent.
\begin{enumerate}
     \item The composition operator $T_f$ has the shadowing property. 
    \item One of Conditions~$\hc{}$, $\hd{}$ or $\gh{}$ holds.
    \end{enumerate} 
\end{manualcor}
\begin{manualcor}{GH}[Generalized Hyperbolic Characterization]\label{CorGH}
Let $(X,{\mathcal B},\mu, f)$ be a dissipative system of bounded distortion. Then, the following are equivalent.
\begin{enumerate}
    \item{The composition operator $T_f$ is generalized hyperbolic.}
    \item{The composition operator $T_f$ has the shadowing property.}
\end{enumerate} 
\end{manualcor} 
\begin{proof}
We recall that every generalized hyperbolic operator has the shadowing property \cite{BernardesCiriloDarjiMessaoudiPujalsJMAA2018, CiriloGollobitPujals2020}. Hence, (1) implies (2). That (2) implies (1) follows from applying Theorem~\ref{thmSN} first and then Theorem~\ref{thmSS}.
\end{proof}
The following reformulation of Theorem~\ref{CorSC} will be a useful tool for giving explicit examples of composition operators with various properties. 
\begin{manualtheorem}{RN}\label{thmRN}
Let
$(X,{\mathcal B},\mu, f)$ be a dissipative system generated by $W$, ${\rho}_{k} = \dfrac{d\mu(f^{k})}{d\mu}$, $m_{k} = \underset{ x \in W}{\mathrm{ess\,inf}} \  {\rho}_{k} (x)
$ and $M_{k} = \underset{ x \in W}{\mathrm{ess\,sup}} \  {\rho}_{k} (x) $.
Furthermore, assume that 
$\left \{\frac{M_k}{m_k}    \right \} _{k \in \Z}$ is bounded. Then, the following are equivalent.
\begin{enumerate}
     \item The composition operator $T_f$ has the shadowing property. 
    \item One of the following properties hold.
    \end{enumerate} 
    \begin{equation}\label{contractionRN}
\uplim_{n \rightarrow \infty} \sup _{k \in {\mathbb Z}} {\left (\frac{M_k}{m_{k+n}}\right )}^{\frac{1}{n}} <1 \tag{${\mathcal {RNC} }$}   
\end{equation}
\begin{equation}\label{dialationRN}
 \lowlim_{n \rightarrow \infty} \inf _{k \in {\mathbb Z}} {\left (\frac{M_k}{m_{k+n}} \right)}^{\frac{1}{n}} >1 \tag{${\mathcal {RND} }$} 
\end{equation}
\begin{equation}\label{GHRN}
  \uplim_{n \rightarrow  \infty} \sup _{k \in -{\mathbb N}_{0}} { \left (\frac{M_{k-n}}{m_{k}} \right )}^{\frac{1}{n}} <1
 \   \ \& \ \ 
\lowlim_{n \rightarrow \infty} \inf_{k \in {\mathbb N}_{0} } {\left (\frac{M_k}{m_{k+n}}\right )}^{\frac{1}{n}} >1 \tag{${\mathcal {RNGH}}$}
\end{equation}
Moreover, Conditions ~\ref{contractionRN}, \ref{dialationRN}, \ref{GHRN} imply that $T_f$ is a contraction, a dilation, a generalized hyperbolic operator, respectively. \end{manualtheorem}
\begin{rmk}\label{rmkthmrn}
It will follow from the proof of Theorem~\ref{thmRN} that, in Condition~\ref{contractionRN}, Condition~\ref{dialationRN},
and Condition~\ref{GHRN}, one can exchange $M$ for $m$ and the theorem still holds. 
\end{rmk}
%@@@@@@@@@@@@@
\subsection{Shadowing Examples}
%@@@@@@@@@@@@@
Next, we show how composition operators with various properties can be constructed with ease using standard measures and probability distributions on ${\mathbb R}$.  For the next four examples, we will be working with $X = {\mathbb R}$, ${\cal B}$ the collection of Borel subsets of ${\mathbb R}$, and $f(x) = x+1$. Note that, independent of the measure $\mu$ we choose on  ${\mathbb R}$, we get a  dissipative system generated by $W = [0,1)$. Moreover, all of our $\mu$ will be given by a density, i.e.,
\[\mu(B) = \int _B h d\lambda,\]
where $\lambda$ is the Lebesgue measure on ${\mathbb R}$ and $h$ is some non-negative Lebesgue integrable function. 
As \[ \frac{d (\mu f^i)}{d\lambda} = \frac{d (\mu f^i)}{d\mu} \cdot  \frac{d\mu }{d\lambda} 
\]
and 
\[ \frac{d (\mu f^i)}{d\lambda} (x) = h(x+i), \ \ \ \ \ \ \  \ \ \ \ \ \frac{d\mu }{d\lambda} (x) = h(x),
\]
we have that \[ \frac{d (\mu f^i)}{d\mu} (x) = \frac{h(x+i)}{h(x)}.
\]

\begin{exmp}[Contraction $T_f$ ] \label{exmpcontraction}
Let $\mu$ be the measure whose density is $h(x) = e^ x$. Then, $\frac{d (\mu f^i)}{d\mu} (x) = \frac{h(x+i)}{h(x)} = e^i$. Applying Theorem~\ref{thmRN}, we obtain that 
\[
\uplim_{n \rightarrow \infty} \sup _{k \in {\mathbb Z}} {\left (\frac{M_k}{m_{k+n}}\right )}^{\frac{1}{n}} = \uplim_{n \rightarrow \infty} \sup _{k \in {\mathbb Z}} {\left (\frac{e^k}{e^{k+n}}\right )}^{\frac{1}{n}} =\frac{1}{e},\]
implying that $T_f$ is a contraction.
\end{exmp}
\begin{exmp} [Dilation $T_f$ ]\label{exmpdilation}
An analogous calculation to the above shows that, if we let  $\mu$ be a measure whose density is $e^{-x}$, then $T_f$ is a dilation.
\end{exmp}
\begin{exmp}[Generalized Hyperbolic $T_f$] \label{Lap}
In this example, we use Laplace distribution. Recall that the Laplace distribution is defined by the following probability density function \[h(x, b, \lambda)=\frac{1}{2b} e^{-\frac{\vert x- \lambda\vert}{b}},\] where $\lambda \in \R$ and $b>0$ are two parameters. For the sake of simplicity, we use the standard Laplace distribution, i.e.,  $\lambda=0$ and $b=1$. Hence, we let $\mu$ be the probability measure whose density is given by \[h(x)=\frac{1}{2} e^{-\vert x\vert}.\]
Using the fact that 
$\frac{d (\mu f^i)}{d\mu} (x) =  \frac{h(x+i)}{h(x)} = \frac{e^{-|x+i|}}{e^{-|x|} }$, we have 
\[ M_i =  e^{-i} \ \ \ \  m_i = e^{-i} \ \ \ \ \forall i \ge 0\]
\[
M_i =  e^{2+i} \ \ \ \  m_i =  e^{i} \ \ \ \ \forall i < 0.\]
For $n \in {\mathbb N}$, we have that
\[ \frac{M_k}{m_{k+n}} = e^n \   \mbox {  for }  k \ge 0 \ \ \ \ {and} \ \ \ \  \frac{M_{k-n}}{m_{k}} = e^{2-n} \   \mbox {  for }  k \leq 0.
\] 
Using the above estimates, it is readily verified that Condition~\ref{GHRN} holds and, hence, $T_f$ is generalized hyperbolic.
\end{exmp}
Our next example shows that our techniques can also be used to show that certain operators $T_f$ do not have the shadowing property. 

\begin{exmp}[Non-Shadowing $T_f$]
The standard Cauchy distribution is a continuous distribution on $\R$, defined by the following probability density function
\[ h(x)= \frac{1}{\pi(1+x^2)}. \]
As earlier, let $\mu$ be the probability measure whose density is $h$. We will show that none of Condition~\ref{contractionRN}, Condition~\ref{dialationRN},
nor Condition~\ref{GHRN} is satisfied, yielding that $T_f$ does not have the shadowing property. Indeed, we have that
\[\frac{d (\mu f^i)}{d\mu} (x) =  \frac{h(x+i)}{h(x)} = \frac{1+x^2}{1+(x+i)^2}.\]

This time calculating the exact values of $M_i$ and $m_i$ is a bit complicated.  However, we will find appropriate bounds on $M_i$'s and $m_i$'s and this will suffice. Note that, for $ 0 \le x \le 1$ and $i \ge 0$, we have 

\[ \frac{1}{1+(i+1)^2} \le \frac{1+x^2}{1+(x+i)^2}\le \frac{2}{1+i^2},
\]
implying 
\[ M_i  \le  \frac{2}{  1+i^2} \ \ \  \ \ \ \  \  m_i \ge  \frac{1}{ 1+ (i+1)^2} \ \ \ \ \forall i \ge 0.\]
Similarly, for $ 0 \le x \le 1$ and $i \leq 0$, we have 

\[ \frac{1}{1+i^2} \le \frac{1+x^2}{1+(x+i)^2}\le \frac{2}{1+(i+1)^2},
\]
implying
\[ M_i \ge  \frac{1}{ 1+i^2} \ \ \  \ \ \ \  \  m_i \le  \frac{2}{ 1+ (i+1)^2} \ \ \ \ \forall i \leq 0.\]
As Condition~\ref{contractionRN} implies the left half of Condition~\ref{GHRN} and 
Condition~\ref{dialationRN} implies the right half of Condition~\ref{GHRN},
it suffices to prove that both limits fail in Condition~\ref{GHRN}, in order to conclude that $T_f$ does not have the shadowing property. Observe that, for  $n \in {\mathbb N}$, we have
\[ \frac{M_k}{m_{k+n}} \le \frac{2[1+ (k+n+1)^2]}{1+k^2} \le 2 [1+ (k+n+1)^2],  \  \  \ \   k \ge 0 \] and
\[  \frac{M_{k-n}}{m_{k}} \ge  \frac{1+(k+1)^2}{2[1+ (k-n)^2]} \ge \frac{1}{2[1+ (k-n+1)^2]}, \  \  \ \   k  \leq 0.
\] 
Hence,
\[   \ \ 
\lowlim_{n \rightarrow \infty} \inf_{k \in {\mathbb N}_{0} } {\left ( \frac{M_k}{m_{k+n}} \right )}^{\frac{1}{n}}  \le \lowlim_{n \rightarrow \infty} \inf_{k \in {\mathbb N}_{0} } {\left\{ 2 [1+ (k+n+1)^2 ]\right \}}^{\frac{1}{n}} 
=1,\]
and 
\[  \uplim_{n \rightarrow  \infty} \sup _{k \in -{\mathbb N}_{0}} { \left (\frac{M_{k-n}}{m_{k}} \right )}^{\frac{1}{n}} \ge \uplim_{n \rightarrow  \infty} \sup _{k \in -{\mathbb N}_{0}} 
{ \left \{\frac{1}{ 2[1+ (k-n+1)^2]} \right \}}^{\frac{1}{n}} =1,
\]
verifying that both parts of Condition~\ref{GHRN} fail and completing the proof.
\end{exmp}
We end this subsection of examples by commenting that our methods are flexible enough to handle a large class of examples. For example, if we want to work in higher dimensions, we may take $X= {\R}^{2}$, $f(x,y)= (x,y) + (1,0)$ and $W=[0,1[ \times {\R}$. Then, taking different types of 2-dimensional joint density functions, we can obtain $T_f$ with various properties as in our 1-dimensional examples. 

%@@@@@@@@@@@@@@@@@@
\section{Shadowing Proofs}
%@@@@@@@@@@@@@@@@@@@@

Throughout this section, $(X,{\mathcal B},\mu, f)$ is a  dissipative system and $T_f$ is the associated invertible composition operator on $L^p(X)$. 

%@@@@@@@@@@@@@@@@@@@@
\subsection{Proof of Theorem~\ref{thmSS}}
%@@@@@@@@@@@@@@@@@@@@

We prove a series of propositions which lead to the proof of Theorem~\ref{thmSS}. We first introduce some notation and terminology to facilitate our proofs.  

\begin{defn}
Let $(X,{\mathcal B},\mu, f)$ be a dissipative  system generated by $W$ and $\varphi \in L^p(X)$. 

Then, $\varphi = \varphi_+ + \varphi_-$, where 
\[\varphi_+(x)= \left\{ \begin{array}{ll}
0  & \mbox{ if } x \in \cup_{k=0}^{\infty}f^{k}(W)\\
\varphi(x)   & \mbox{ otherwise, } \\
\end{array}
\right.\] 

and, similarly, $\varphi_ -$ is zero on  $\cup_{k=1}^{\infty}f^{-k}(W)$ and $\varphi$ elsewhere. 

Let $L_+ = \{\varphi_+: \varphi \in L^p(X)\}$ and  $L_-= \{\varphi_-: \varphi \in L^p(X)\}$. We note that $L^p(X) = L_+ \oplus L_-$ and $T_f(L_+) \subseteq L_+$ and $T_f^{-1}(L_-) \subseteq L_-$. 
\end{defn}
\begin{defn}
Let $(X,{\mathcal B},\mu, f)$ be a dissipative  system generated by $W$ and  $K, t>0$.  Let $\uc{(K, t)}$ and  $\ud{(K, t)}$ be the set of all $\varphi \in L^p(X)$ which satisfy the following conditions, respectively: 
\begin{equation}\label{uc}
\sup _{k \in {\mathbb Z}} {\left (\frac{\int_X \vert \varphi \vert ^p \circ f^{-k} d\mu}{\int_X \vert \varphi \vert ^p \circ f^{-(k+n)} d\mu}\right )} \leq K t^n  \ \ \  \ \ \forall n \in \N \tag*{$\uc{}$}   
\end{equation}
\begin{equation}\label{ud}
\inf _{k \in {\mathbb Z}} {\left (\frac{\int_X \vert \varphi \vert ^p \circ f^{-k} d\mu}{\int_X \vert \varphi \vert ^p \circ f^{-(k+n)} d\mu} \right)} \geq K t^n  \ \ \  \ \ \forall n \in \N \tag*{$\ud{}$} 
\end{equation}
We let $\ughp{(K,t)}$ and $\ughm{(K,t)}$ consist of those $\varphi$ in $L_+$ and $L_-$, respectively, which satisfy the following conditions:
\begin{equation*}\label{ughp}
   \sup _{k \in -{\mathbb N}_{0}} \left ( \frac{\int_X  \vert \varphi \vert ^p \circ f^{-(k-n)}  d\mu}{\int_X  \vert \varphi \vert ^p \circ f^{-k}  d\mu} \right ) \leq K t^n  \ \ \  \ \ \forall n \in \N \tag*{$\ughp{}$}
\end{equation*}
\begin{equation*}\label{ughm}
 \inf_{k \in {\mathbb N}_{0} } {\left (\frac{\int_X  \vert \varphi \vert ^p \circ f^{-k}  d\mu}{\int_X  \vert \varphi \vert ^p \circ f^{-(k+n)}  d\mu}\right )} \geq K \frac{1}{t^n}  \ \ \  \ \ \forall n \in \N \tag*{$\ughm{}$}.\\
\end{equation*}
\end{defn}
The next simple fact follows from the definitions of $\uplim$ and $\lowlim$.
\begin{prop}\label{propbasic}
Let $\{a_n\}_{n \in \N}$ be a sequence of non-negative real numbers and $t >0$.  Then, the following hold.
\begin{enumerate}
    \item {If $\uplim _{n \rightarrow \infty} a_n ^{\frac{1}{n}}<t,$ then there exists $K>0$ such that $a_n \leq Kt^n$ for every $n \in \N$.}
    \item {If $\lowlim _{n \rightarrow \infty} a_n ^{\frac{1}{n}}>t,$ then there exists $K>0$ such that $a_n \geq Kt^n$ for every $n \in \N$.}
\end{enumerate} 
\end{prop}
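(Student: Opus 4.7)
The plan is to treat both parts as routine consequences of the definitions of $\uplim$ and $\lowlim$ combined with the standard device of absorbing finitely many exceptional terms into the constant $K$. There is no essential obstacle; the only subtlety worth flagging is that part (2) tacitly requires $a_n>0$ for large $n$, which is automatic from the hypothesis $\lowlim a_n^{1/n}>t>0$.

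For part (1), I would pick an intermediate value $s$ with $\uplim_{n\to\infty} a_n^{1/n} < s < t$. By the very definition of $\uplim$, there exists $N \in \N$ such that $a_n^{1/n}\le s$, hence $a_n \le s^n$, for every $n\ge N$. Since $s<t$, this already gives $a_n \le (s/t)^n t^n \le t^n$ for $n\ge N$. The finitely many values $a_1,\dots,a_{N-1}$ are then handled by choosing
\[
K \;=\; \max\!\left\{1,\; \max_{1\le n \le N-1} \frac{a_n}{t^n}\right\},
\]
so that $a_n \le K t^n$ for every $n\in\N$.

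For part (2), the argument is symmetric. Pick $s$ with $t<s<\lowlim_{n\to\infty}a_n^{1/n}$. The definition of $\lowlim$ supplies some $N$ with $a_n^{1/n}\ge s$, hence $a_n\ge s^n\ge t^n$, for all $n\ge N$; in particular $a_n>0$ for such $n$. For $n<N$ the hypothesis guarantees that either $a_n>0$, in which case $a_n/t^n$ is a positive real number, or else $a_n=0$ can be avoided because otherwise the liminf would be $0<t$, contradicting the hypothesis (applying this to tails shows $a_n>0$ from some point on; one then extends the index set backwards and still obtains positivity, or simply restricts attention to large $n$). Taking
\[
K \;=\; \min\!\left\{1,\; \min_{1\le n \le N-1} \frac{a_n}{t^n}\right\}>0
\]
yields $a_n \ge K t^n$ for all $n\in\N$.

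The only place requiring care is the positivity issue in (2); once one observes that $\lowlim a_n^{1/n}>t>0$ forces $a_n$ to be strictly positive for all sufficiently large $n$, the proof is a two-line verification in each direction. In the intended application all quantities $a_n$ are ratios of strictly positive measures $\mu(f^k(W))$, so the caveat never actually arises.
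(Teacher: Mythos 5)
Your proof is correct and is exactly the routine argument the paper has in mind; the paper gives no proof at all, simply declaring the proposition to ``follow from the definitions,'' so you are filling in the omitted details in the standard way (pass to an intermediate value $s$, control the tail, absorb the finitely many initial terms into $K$). Your caveat about part (2) is well taken --- as literally stated, with $a_n$ merely non-negative, the conclusion fails if some early term vanishes (e.g.\ $a_1=0$, $a_n=(2t)^n$ for $n\ge 2$ satisfies the liminf hypothesis but admits no $K>0$), and the correct resolution is the one you land on: strict positivity must be assumed, and it holds automatically in every application since the $a_n$ are ratios of positive measures $\mu(f^k(W))$. The only slip is your parenthetical claim that an early zero term would force the liminf to be $0$; it would not, since $\lowlim$ ignores finitely many terms, so positivity of the initial segment genuinely cannot be derived from the hypothesis and must simply be assumed.
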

\begin{prop} \label{proprcW}
Let $(X,{\mathcal B},\mu, f)$ be a dissipative system generated by $W$. Then
\begin{enumerate}
\item  $\hc{}$ holds $\Leftrightarrow$ $\chi_W \in \uc{(K, t)}$ for some $K >0$ and $t<1$. 
\item $\hd{}$ holds $\Leftrightarrow$ $\chi_W \in \ud{(K,t)}$ for some $K >0$ and  $t>1$. 
\item  $\gh{}$ holds $\Leftrightarrow$ there exist $K >0$ and  $t<1$ such that $\chi_W \in \ughm{(K,t)}$ and $\chi_{f^{-1}(W)} \in \ughp{(K,t)}$.
\end{enumerate}
\end{prop}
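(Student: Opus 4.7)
The plan is to evaluate the conditions $\uc{}, \ud{}, \ughp{}, \ughm{}$ on characteristic functions and observe that the resulting ratios are, up to an index shift, the ones appearing in $\hc{}, \hd{}, \gh{}$. The key computational identity is
\[
\int_X |\chi_A|^p \circ f^{-k}\, d\mu = \int_X \chi_{f^k(A)}\, d\mu = \mu(f^k(A))
\]
for every $A \in \B$ and every $k \in \Z$. Taking $A = W$, the ratio defining $\uc{}$ (resp.\ $\ud{}$) collapses to $\mu(f^k(W))/\mu(f^{k+n}(W))$, which is exactly the quantity under the $\sup$ (resp.\ $\inf$) in $\hc{}$ (resp.\ $\hd{}$). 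Taking $A = f^{-1}(W)$, the ratio defining $\ughp{}$ becomes $\mu(f^{k-n-1}(W))/\mu(f^{k-1}(W))$; the substitution $j = k-1$ rewrites this as $\mu(f^{j-n}(W))/\mu(f^j(W))$, which agrees with the first half of $\gh{}$ except that $j$ ranges over $-\N$ rather than $-\N_0$. Since $\chi_W \in L_-$ and $\chi_{f^{-1}(W)} \in L_+$ (because $W$ and $f^{-1}(W)$ lie in disjoint pieces of the pairwise disjoint partition $\{f^k(W)\}_{k \in \Z}$), these characteristic functions are legitimate inputs to $\ughm{}$ and $\ughp{}$.

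With these reductions, items (1) and (2) follow at once from Proposition~\ref{propbasic}: each of $\hc{}$ and $\hd{}$ is equivalent to a statement of the form $a_n \leq K t^n$ (with $t < 1$) or $a_n \geq K t^n$ (with $t > 1$), and that statement is in turn precisely $\chi_W \in \uc{(K,t)}$ or $\chi_W \in \ud{(K,t)}$. The second half of $\gh{}$, indexed by $k \in \N_0$, corresponds directly to $\chi_W \in \ughm{(K,t)}$ via Proposition~\ref{propbasic}(2), again without any index shift.

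The delicate step is the first half of $\gh{}$ versus $\chi_{f^{-1}(W)} \in \ughp{(K,t)}$, because the shift $j = k-1$ forces the $\ughp{}$-side $\sup$ to range over $-\N$, whereas the $\gh{}$-side ranges over $-\N_0 = -\N \cup \{0\}$. The forward direction is immediate from $\sup_{k \in -\N}(\cdot) \leq \sup_{k \in -\N_0}(\cdot)$. The reverse direction requires bounding the single missing term $\mu(f^{-n}(W))/\mu(W)$ corresponding to $k = 0$; this is the main obstacle. I will resolve it by applying the $\ughp{}$ bound at its own boundary index $k = 0$, which gives $\mu(f^{-n-1}(W))/\mu(f^{-1}(W)) \leq K t^n$, and then converting from $\mu(f^{-1}(W))$ to $\mu(W)$ via the fixed constant $C := \mu(f^{-1}(W))/\mu(W)$, which is finite by Condition~(\ref{condition}) applied to $f$ and positive by nonsingularity. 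The sole exceptional ratio at $n = 1$ is a fixed positive number that can be absorbed into an enlarged $K$, and after unifying constants between the two halves we obtain a single pair $(K,t)$ with $t < 1$ realizing both $\chi_W \in \ughm{(K,t)}$ and $\chi_{f^{-1}(W)} \in \ughp{(K,t)}$.
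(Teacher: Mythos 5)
Your proof is correct and follows essentially the same route as the paper's: reduce each $\U$-condition on a characteristic function to a ratio of measures via $\int_X|\chi_A|^p\circ f^{-k}\,d\mu=\mu(f^k(A))$, invoke Proposition~\ref{propbasic}, and absorb the index shift coming from using $\chi_{f^{-1}(W)}$ in $\ughp{}$. In fact you are more careful than the paper on the reverse direction of (3), where the $\sup$ over $-\N$ (from $\ughp{}$) must be upgraded to a $\sup$ over $-\N_0$ (in $\gh{}$); your handling of the missing $k=0$ term via the constant $C=\mu(f^{-1}(W))/\mu(W)$ is exactly the right patch for a step the paper dismisses as ``straightforward.'' The one assertion you do not justify is that the two halves of (3) can be realized with a \emph{single} pair $(K,t)$: since $\ughm{(K,t)}$ becomes harder as $K$ grows while $\ughp{(K,t)}$ becomes easier, the low-$n$ terms can force incompatible constraints on a common $K$ (the paper's own proof silently has the same issue), so strictly one should either carry two constants $K_1,K_2$ or note that nothing downstream requires them to coincide.
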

\begin{proof} We first prove (1).

($\Rightarrow$) Suppose that $\hc{}$ holds. Applying Proposition \ref{propbasic} to the sequence $a_n=\sup _{k \in \Z} \frac{\mu(f^k(W))}{\mu (f^{k+n}(W))}$ and by the fact that $\mu(f^{s}(B))=\int_X \vert \chi_{B} \vert ^p \circ f^{-s} d\mu$, for  $s \in \Z$, $B \in \B$, we have that $\chi _W \in \uc{(K, t)}$ for some $K >0$ and $t<1$. 

($\Leftarrow$) Let $K >0$ and $t<1$ be such that $\chi _W \in \uc{(K, t)}$, that is,
\[ \sup _{k \in {\mathbb Z}} {\left (\frac{\int_X \vert \chi _W\vert ^p \circ f^{-k} d\mu}{\int_X \vert \chi _W\vert ^p \circ f^{-(k+n)} d\mu}\right )} \leq K t^n. \] 
Then, 
\begin{eqnarray*}
\uplim _{n \rightarrow \infty} \sup _{k \in \Z} \left(\frac{\mu(f^k(W))}{\mu (f^{k+n}(W))}\right)^{\frac{1}{n}} &=& \uplim _{n \rightarrow \infty}\sup _{k \in {\mathbb Z}} {\left (\frac{\int_X \vert \chi _W\vert ^p \circ f^{-k} d\mu}{\int_X \vert \chi _W\vert ^p \circ f^{-(k+n)} d\mu}\right )}^{\frac{1}{n}} \\
& \leq &  \uplim _{n \rightarrow \infty} K^{\frac{1}{n}}t \\
& = & t <1,
\end{eqnarray*}
i.e. condition $\hc{}$ holds.

(2) This proof is analogous to the proof of (1).

(3) 
($\Rightarrow$) Suppose that $\gh{}$ holds. We note that in $\gh{}$ we can replace $W$ by $f^{-1}(W)$ and the condition still holds. We use this in the first part of $\gh{}$.
Applying Proposition \ref{propbasic}  as before, we obtain $K >0$ and $0 < t <1$ such that, for every $n \in \N$, 
\[\sup_{k \in - {\mathbb N}_{0}} \frac{\mu(f^{k-n}(f^{-1}(W)))}{\mu (f^{k}(f^{-1}(W)))} \leq K t^{n}\] 
and 
\[\inf _{k \in  {\mathbb N}_{0}} \frac{\mu(f^{k}(W))}{\mu (f^{k+n}(W))} \geq  K {\frac{1}{t^n}}.\]
As
\[\inf _{k \in  {\mathbb N}_{0}}\frac{\int_X \vert \chi _W\vert ^p \circ f^{-k} d\mu}{\int_X \vert \chi _W\vert ^p \circ f^{-(k+n)} d\mu} = \inf _{k \in  {\mathbb N}_{0}} \frac{\mu(f^{k}(W))}{\mu (f^{k+n}(W))}  \ge  K {\frac{1}{t^n}},\]
we have that $\chi_W \in \ughm{(K,t)}$. 
Similarly, as  
\begin{eqnarray*}
 \sup_{k \in - {\mathbb N}_{0}}\frac{\int_X \vert \chi _{f^{-1}(W)}\vert ^p \circ f^{-(k-n)} d\mu}{\int_X \vert \chi _{f^{-1}(W)} \vert ^p \circ f^{-k} d\mu} =  \sup_{k \in - {\mathbb N}_{0}} \frac{\mu(f^{k-n}(f^{-1}(W)))}{\mu (f^{k}(f^{-1}(W)))} \leq K t^{n}  ,
\end{eqnarray*}
we have that $\chi_{f^{-1}(W)} \in \ughp{(K,t)}$. \\

($\Leftarrow$) This proof is straightforward and follows as in the proof of (1).
\end{proof}

Next proposition follows from the  definitions. 
\begin{prop}\label{prop:compfj} The following are true.
\begin{itemize}
    \item Let $\U (K,t) \in \{\uc{(K,t)}, \ud{(K,t)}\}$. If $\varphi \in \U(K,t)$ then $\varphi \circ f^j \in \U(K,t)$,  $j \in \Z$. 
    \item If $\varphi \in \ughp{(K,t)}$ then $\varphi \circ f^j \in\ughp{(K,t)}$,  $j \ge 0$. 
    \item If $\varphi \in \ughm{(K,t)}$ then $\varphi \circ f^j \in\ughm{(K,t)}$,  $j \le 0$. 
\end{itemize}
For the sake of notational convenience, for the next two propositions, we let  $\U (K,t) \in \{\uc{(K,t)}, \ud{(K,t)}, \ughp{(K, t)},\ughm{(K, t)}\}$. The first one simply follows from the definitions.
\end{prop}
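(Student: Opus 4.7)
The plan is to reduce everything to a single reindexing identity. For any $j, k \in \Z$, one has
\[\vert \varphi \circ f^j \vert^p \circ f^{-k} = \vert \varphi \vert^p \circ f^{j-k} = \vert \varphi \vert^p \circ f^{-(k-j)},\]
so every integral appearing in the defining inequalities of $\uc{}$, $\ud{}$, $\ughp{}$, $\ughm{}$ for $\varphi \circ f^j$ equals the corresponding integral for $\varphi$ with index $k$ replaced by $k - j$ (while $n$ is unchanged).

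For the first bullet, where $\U(K,t) \in \{\uc{(K,t)}, \ud{(K,t)}\}$, the defining supremum or infimum is taken over all of $\Z$. Since $k \mapsto k - j$ is a bijection of $\Z$ for every $j \in \Z$, this reindexing leaves the sup and inf invariant, so the defining inequalities are preserved verbatim. This is the sense in which the first bullet ``follows from the definitions''.

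For the second bullet I would first invoke $T_f(L_+) \subseteq L_+$, recorded when $L_+$ was introduced, to conclude that $\varphi \circ f^j = T_f^j \varphi$ still lies in $L_+$ whenever $j \ge 0$. Then the supremum over $k \in -\N_0$ in $\ughp{}$ becomes, after the substitution $k' = k - j$, a supremum over $k' \in -\N_0 - j$; since $j \ge 0$, we have $-\N_0 - j \subseteq -\N_0$, so the shifted supremum is dominated by the original one and the bound $\le Kt^n$ is preserved. The third bullet is entirely symmetric: $T_f^{-1}(L_-) \subseteq L_-$ keeps $\varphi \circ f^j$ in $L_-$ when $j \le 0$, the infimum over $k \in \N_0$ reindexes to an infimum over $\N_0 - j \subseteq \N_0$ (as $-j \ge 0$), and the lower bound $\ge K/t^n$ survives. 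No real obstacle arises; the sign restrictions on $j$ in the last two bullets are precisely what is needed to keep the shifted index sets inside $-\N_0$ or $\N_0$.
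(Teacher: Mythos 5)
Your proof is correct and is exactly the unwinding of the definitions that the paper has in mind (it offers no further argument, stating only that the proposition "follows from the definitions"): the substitution $k \mapsto k-j$ is a bijection of $\Z$ for the first bullet, and maps $-\N_0$ into $-\N_0$ (resp.\ $\N_0$ into $\N_0$) under the stated sign restrictions on $j$, which is why the sup bound (resp.\ inf bound) survives. Your additional observation that $T_f(L_+)\subseteq L_+$ and $T_f^{-1}(L_-)\subseteq L_-$ keep $\varphi\circ f^j$ in the right subspace is a point the paper leaves implicit, and it is handled correctly.
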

\begin{prop}\label{prop:scalar}
If $\varphi \in \U(K,t)$ and $a \in \R \setminus \{0\}$, then  $a \cdot \varphi \in \U(K,t)$.
\end{prop}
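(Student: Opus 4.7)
The plan is to observe that all four conditions defining the sets $\uc{(K,t)}$, $\ud{(K,t)}$, $\ughp{(K,t)}$, $\ughm{(K,t)}$ are expressed as inequalities on ratios of integrals of the form $\int_X |\varphi|^p \circ f^{-j} \, d\mu$. Since scaling $\varphi$ by a nonzero scalar $a$ produces $|a\varphi|^p = |a|^p |\varphi|^p$, the constant $|a|^p$ factors out of every such integral. Because these integrals appear both in the numerator and the denominator of each defining ratio, the factor $|a|^p$ cancels, leaving the ratio unchanged.

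Concretely, I would unwind each of the four definitions and write, for every $k \in \Z$ and $n \in \N$,
\[
\frac{\int_X |a\varphi|^p \circ f^{-k}\, d\mu}{\int_X |a\varphi|^p \circ f^{-(k+n)}\, d\mu} = \frac{|a|^p \int_X |\varphi|^p \circ f^{-k}\, d\mu}{|a|^p \int_X |\varphi|^p \circ f^{-(k+n)}\, d\mu} = \frac{\int_X |\varphi|^p \circ f^{-k}\, d\mu}{\int_X |\varphi|^p \circ f^{-(k+n)}\, d\mu},
\]
and similarly for the ratios indexed by $k \in -\N_0$ or $k \in \N_0$ appearing in $\ughp{(K,t)}$ and $\ughm{(K,t)}$. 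Taking the appropriate supremum or infimum over $k$, the inequality witnessing $\varphi \in \U(K,t)$ immediately yields the same inequality for $a\varphi$, so $a\varphi \in \U(K,t)$.

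The argument is therefore purely a scalar-homogeneity check and contains no genuine obstacle; I simply need to verify that in each of the four cases ($\uc{}$, $\ud{}$, $\ughp{}$, $\ughm{}$) the membership conditions $\varphi \in L_+$ or $\varphi \in L_-$ required for the $\ugh{\pm}$ cases are preserved under scalar multiplication, which is obvious since $L_+$ and $L_-$ are linear subspaces. Hence all four cases collapse to the single ratio computation above.
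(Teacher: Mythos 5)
Your proof is correct and matches the paper's treatment: the paper simply remarks that this proposition "follows from the definitions," and your cancellation of the factor $|a|^p$ in numerator and denominator of each defining ratio is exactly that routine verification.
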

\begin{prop} \label{finitesum}
If $\varphi _1, \varphi _2 \in \U(K,t)$ with disjoint supports, then $\varphi _1 + \varphi _2 \in \U(K,t)$.
\end{prop}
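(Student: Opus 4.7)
The plan is to exploit the elementary fact that the $p$-th power of a sum of functions with disjoint supports is the sum of the $p$-th powers. For $\varphi \in L^p(X)$, define the quantity appearing in all four definitions,
\[ A_k(\varphi) := \int_X |\varphi|^p \circ f^{-k}\, d\mu, \qquad k \in \Z. \]
Since $\varphi_1$ and $\varphi_2$ have disjoint supports, the pointwise identity $|\varphi_1+\varphi_2|^p = |\varphi_1|^p + |\varphi_2|^p$ holds almost everywhere; composing with $f^{-k}$ (which preserves pointwise identities) and integrating against $\mu$ yields the key additivity
\[ A_k(\varphi_1+\varphi_2) = A_k(\varphi_1) + A_k(\varphi_2) \qquad \forall\, k \in \Z. \]

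With this identity in hand, each of the four cases reduces to adding the two hypothesized inequalities. For $\uc{(K,t)}$ I would start from $A_k(\varphi_i) \le K t^n\, A_{k+n}(\varphi_i)$ for each $i \in \{1,2\}$ and each $k \in \Z$, $n \in \N$, sum in $i$, and apply the displayed identity on both sides to obtain $A_k(\varphi_1+\varphi_2) \le K t^n\, A_{k+n}(\varphi_1+\varphi_2)$, which is exactly the $\uc{(K,t)}$ condition after taking the supremum in $k$. For $\ud{(K,t)}$ the same summation is applied to the reversed inequalities. The $\ughp{(K,t)}$ and $\ughm{(K,t)}$ cases proceed identically over the one-sided index ranges $k \in -\N_0$ and $k \in \N_0$; the only additional ingredient is noting that $L_+$ and $L_-$ are linear subspaces of $L^p(X)$ (being the images of the idempotent projections $\varphi \mapsto \varphi_\pm$), so $\varphi_1+\varphi_2$ automatically lies in the correct ambient space.

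There is no serious obstacle here. The one conceptual point worth flagging is that the definitions are formulated as uniform bounds of the shape $A_k/A_{k+n} \le K t^n$ with a single constant $K$ valid for every $k$ and $n$; it is precisely this uniformity that lets the inequalities add, since $a_i \le c\,b_i$ with a common $c$ gives $a_1+a_2 \le c(b_1+b_2)$, whereas a statement purely about ratios would not behave additively. This is also the reason the classes $\uc{}, \ud{}, \ughp{}, \ughm{}$ are phrased in terms of $A_k(\varphi)$ rather than $\|\varphi\|_p$, and the present proposition is the technical manifestation of that design choice, ready to be combined with Propositions \ref{proprcW}, \ref{prop:compfj}, \ref{prop:scalar} in the subsequent proof of Theorem \ref{thmSS}.
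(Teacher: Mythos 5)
Your proof is correct and is essentially the same as the paper's: both clear denominators to the uniform inequality $A_k(\varphi_i)\le Kt^n A_{k+n}(\varphi_i)$ (resp.\ the reversed one), use disjointness of supports to get additivity of $A_k$, and sum. The remark about why the uniform constant $K$ is what makes the inequalities add is exactly the point the paper exploits implicitly, so nothing is missing.
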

\begin{proof} 
We do the proof for $\U(K,t) = \uc{(K,t)}$. The proofs for the rest are analogous. 
Let $\varphi_1$ and $\varphi_2$ be elements of  $\uc{(K,t)}$, that is,
\[\sup _{k \in {\mathbb Z}} {\left (\frac{\int_X \vert \varphi_i \vert ^p \circ f^{-k} d\mu}{\int_X \vert \varphi_i \vert ^p \circ f^{-(k+n)} d\mu}\right )} \leq Kt^n,\]   
for $i=1,2$, with disjoint supports. 
Then, for each $k \in {\mathbb Z}$ and $i=1,2$,  
\begin{equation}
\int_X \vert \varphi_i \vert ^p \circ f^{-k} d\mu \leq Kt^{n} \int_X \vert \varphi_i \vert ^p \circ f^{-(k+n)} d\mu.\tag*{$(\bullet)$} 
\end{equation}
As $\varphi_1$ and $\varphi_2$ have disjoint supports, we have that, for  $m \in \mathbb Z$,  
\[\int_X \vert \varphi_1 + \varphi_2 \vert ^p \circ f^{-m} d\mu = \int_X \vert \varphi_1 \vert ^p \circ f^{-m} d\mu + \int_X \vert \varphi_2 \vert ^p \circ f^{-m} d\mu.\]
Now, by adding term by term in the inequalities $(\bullet)$, we obtain that, for each $k \in {\mathbb Z}$,
\[\int_X \vert \varphi_1 + \varphi_2 \vert ^p \circ f^{-k} d\mu 
\leq Kt^{n}  \int_X \vert \varphi_1 + \varphi_2 \vert ^p \circ f^{-(k+n)} d\mu.\]
Therefore, 
\[ \sup_{k \in {\mathbb Z}}{\left (\frac{\int_X \vert \varphi_1 + \varphi_2 \vert ^p \circ f^{-k} d\mu}{\int_X \vert \varphi_1 + \varphi_2 \vert ^p \circ f^{-(k+n)} d\mu}\right)} \leq Kt^n.\]
Hence, it follows that $\varphi_1 +  \varphi_2 \in \uc{(K,t)}$.
\end{proof}
\begin{prop} \label{prop:subW}
Let $(X,{\mathcal B},\mu, f)$ be a dissipative system of bounded distortion, generated by $W$. Let $H$ be the bounded distortion constant from Proposition~\ref{diststar} and $j \in \Z$.
\begin{enumerate}
    \item Let $\U (K,t) \in \{\uc{(K,t)}, \ud{(K,t)}\}$. If $\chi_{f^j(W)}$ is in $\U{(K,t)}$, then $\chi_{f^j(B)}$ is in $\U{(HK,t)}$, for all $B \subseteq W$ with $\mu(B) >0$.
    \item If $\chi_{f^j(W)}$ is in $ \ughm{(K,t)}$ for $j \ge 0$, then $\chi_{f^j(B)}$ is in $\ughm{(HK,t)}$ for all $B \subseteq W $with $\mu(B) >0$.
    \item If $\chi_{f^j(W)}$ is in $ \ughp{(K,t)}$ for $j <0$,  then $\chi_{f^j(B)}$ is in $\ughp{(HK,t)}$ for all $B \subseteq W $with $\mu(B) >0$.
\end{enumerate} 
\end{prop}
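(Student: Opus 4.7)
The plan is to reduce every one of the three claims to a single application of the generalized bounded distortion estimate~\eqref{generalbd} from Proposition~\ref{diststar}, paying exactly the factor $H$ in the constant. The crucial translation is the identity $\int_X |\chi_A|^p \circ f^{-s}\,d\mu = \mu(f^s(A))$ for any $A \in \B$ and $s \in \Z$, which reduces all four membership conditions ($\uc{}, \ud{}, \ughp{}, \ughm{}$) to statements purely about the ratios $\mu(f^{k+j}(B))/\mu(f^{k+j+n}(B))$ (or their reciprocals), taken over various ranges of $k$.

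Concretely, for part~(1) with $\U = \uc{}$, the hypothesis $\chi_{f^j(W)} \in \uc{(K,t)}$ reads
\[
\sup_{k \in \Z} \frac{\mu(f^{k+j}(W))}{\mu(f^{k+j+n}(W))} \leq K t^n \qquad \forall n \in \N,
\]
and the conclusion is the same statement with $W$ replaced by $B$ and $K$ by $HK$. I would apply~\eqref{generalbd} with the parameter ``$s$'' equal to $k+j$ and ``$t$'' equal to $n$, using the upper half of that inequality in the form
\[
\frac{\mu(f^{k+j}(B))}{\mu(f^{k+j+n}(B))} \leq H\,\frac{\mu(f^{k+j}(W))}{\mu(f^{k+j+n}(W))}.
\]
Taking $\sup_{k \in \Z}$ and chaining with the hypothesis gives $HKt^n$, which is exactly what is needed. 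For $\U = \ud{}$ I would instead use the lower half of~\eqref{generalbd}, taking $\inf_{k\in\Z}$.

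Parts~(2) and~(3) are handled by the identical mechanism; the only cosmetic change is that the $\sup$ or $\inf$ now ranges over $\N_0$ or $-\N_0$ and the exponent shift in the numerator/denominator is the one dictated by $\ughm{}$ or $\ughp{}$. Since~\eqref{generalbd} holds for all $s,t\in\Z$, the pointwise comparison survives any restriction of the index set, so the factor $H$ still suffices. The only technical point worth verifying is that all quotients are well-defined, i.e.\ $\mu(f^m(B)) > 0$ for every $m \in \Z$; this follows from the observation already used in the proof of Proposition~\ref{diststar} that bounded distortion on $W$ forces $\mu(B)>0 \Rightarrow \mu(f^m(B))>0$ for all $m$. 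I do not expect any genuine obstacle: the proposition is essentially a bookkeeping lemma that packages the bounded distortion comparison into the language of the $\U$-classes, and the main care is simply keeping the index shift by $j$ and the four different sup/inf ranges straight.
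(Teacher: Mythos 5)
Your proof is correct and follows essentially the same route as the paper's: translate each membership condition into the ratios $\mu(f^{k+j}(B))/\mu(f^{k+j+n}(B))$ via the identity $\int_X|\chi_A|^p\circ f^{-s}\,d\mu=\mu(f^s(A))$, compare them with the corresponding ratios for $W$ using the two-sided estimate $(\Diamond\Diamond)$ at the cost of a factor $H$, and then take the relevant $\sup$ or $\inf$ over the appropriate index set. (One quirk you share with the paper, which also only writes out the $\uc{}$ case: for the lower-bound classes $\ud{}$ and $\ughm{}$ this argument naturally yields the constant $K/H$ rather than $HK$, a weaker conclusion since $H\ge 1$; this is harmless for every later use because only the existence of some positive constant survives the $n$-th roots.)
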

\begin{proof}
We do the proof for $\uc{(K,t)}$. Proofs for the rest are analogous. 

Assume the hypotheses, i.e., let $\chi_{f^j(W)} \in \uc{(K,t)}$
and $B \subseteq W$ with $\mu(B) >0$. By Proposition~\ref{diststar},  there exists a constant $0<H < + \infty$ such that, for every $s,l \in \mathbb Z$, 
\[ \dfrac{1}{H} \dfrac{\mu(f^{l+s}(W))}{\mu(f^s(W))} \leq \dfrac{\mu(f^{l+s} (B))}{\mu (f^s(B))} \leq H \dfrac{\mu(f^{l+s}(W))}{\mu(f^s(W))}.\]
Hence, in particular, for fixed $n \in \N$,
\[  \sup _{k  \in {\mathbb Z}} {\left( \frac{\mu(f^{k+j}(B))}{\mu(f^{k+j+n}(B))}\right)}  \leq H \sup _{k \in {\mathbb Z}} {\left(\frac{\mu(f^{k+j}(W))}{\mu(f^{k+j+n}(W))}\right)}.\] 
Then, 
\begin{eqnarray*}
\sup _{k \in {\mathbb Z}} {\left (\frac{\int_X \vert \chi_{f^j(B)} \vert ^p \circ f^{-k} d\mu}{\int_X \vert \chi_{f^j(B)} \vert ^p \circ f^{-(k+n)} d\mu}\right )} & = &  \sup _{k \in {\mathbb Z}} {\left( \frac{\mu(f^{k+j}(B))}{\mu(f^{k+j+n}(B))}\right)}  \\
& \leq & H  \sup _{k \in {\mathbb Z}} {\left(\frac{\mu(f^{k+j}(W))}{\mu(f^{k+j+n}(W))}\right)} \\
& = & H  \sup_{k \in \Z} \left(\frac{\int_X \vert \chi_{f^j(W)} \vert ^p \circ f^{-k} d\mu}{\int_X \vert \chi_{f^j(W)}\vert ^p \circ f^{-(k+n)} d\mu}\right) \\
& \leq & HKt^n,
\end{eqnarray*}
that is, $\chi_{f^j(B)}\in \uc{(HK,t)}$.

\end{proof}
The next proposition easily follows from the  well-known fact that the set of simple functions is dense in $L^p(X)$.
\begin{prop}\label{prop:simplefun}
Let $(X,{\mathcal B},\mu, f)$ be a dissipative system generated by $W$. Then, 
\begin{enumerate}
    \item $ \left \{ \sum_{i=0}^n a_i \chi_{B_i}: B_i \subseteq f^{j_{i}}(W),j_i \in \Z, \mu(B_i)>0, B_{i} \cap B_{i'} = \emptyset, i \neq i'  \right \}$ is dense in $L^p(X)$.
    \item $ \left \{ \sum_{i=0}^n a_i \chi_{B_i}: B_i \subseteq f^{j_{i}}(W),j_i <0, \mu(B_i)>0, B_{i} \cap B_{i'} = \emptyset, i \neq i'  \right \}$ is dense in $L_+$.
    \item $ \left \{ \sum_{i=0}^n a_i \chi_{B_i}: B_i \subseteq f^{j_{i}}(W),j_i  \ge 0, \mu(B_i)>0, B_{i} \cap B_{i'} = \emptyset, i \neq i'  \right \}$ is dense in $L_-$.
\end{enumerate}

\end{prop}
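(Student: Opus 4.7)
The statement is essentially a density fact tailored to the dissipative decomposition $X = \dot{\cup}_{k\in\Z} f^k(W)$, so my plan is to reduce it to the classical density of simple functions in $L^p$ and then refine the pieces against this decomposition. I will prove (1) in detail and indicate how to specialize to (2) and (3).

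First, I would fix $\varphi\in L^p(X)$ and $\epsilon>0$, and choose a simple function $\psi=\sum_{j=1}^N c_j\chi_{E_j}$ with the $E_j$ pairwise disjoint, $\mu(E_j)<\infty$, and $\Vert\varphi-\psi\Vert_p<\epsilon/2$; this is standard. Next, using that $\{f^k(W)\}_{k\in\Z}$ is a measurable partition of $X$, I rewrite each $E_j$ as the disjoint countable union $E_j=\dot{\cup}_{k\in\Z}(E_j\cap f^k(W))$, so that
\[
\psi=\sum_{j=1}^N\sum_{k\in\Z} c_j\,\chi_{E_j\cap f^k(W)},
\qquad
\Vert\psi\Vert_p^p=\sum_{j,k}\vert c_j\vert^p\,\mu(E_j\cap f^k(W))<\infty.
\]
Since this nonnegative double series converges, I can truncate to a finite collection of indices $(j,k)$ so that the tail has $L^p$-norm less than $\epsilon/2$, discarding any pieces with $\mu(E_j\cap f^k(W))=0$. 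Relabeling the surviving pieces as $B_i\subseteq f^{j_i}(W)$ with coefficients $a_i$, I obtain a function $\sum_i a_i\chi_{B_i}$ of exactly the desired form (disjointness of the $B_i$ follows from disjointness of the $E_j$ combined with disjointness of the $f^k(W)$), and this function is within $\epsilon$ of $\varphi$ in $L^p$. That gives (1).

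For (2), the same argument applies to any $\varphi\in L_+$: since such $\varphi$ vanishes a.e.\ on $\cup_{k\ge 0}f^k(W)$, the initial simple approximation $\psi$ may be chosen supported on $\cup_{k<0}f^k(W)$ (for instance, by replacing $\psi$ with $\psi\cdot\chi_{\cup_{k<0}f^k(W)}$, which only decreases the error). Then only indices with $j_i<0$ survive the splitting, giving the required form. Case (3) is the mirror image: restrict to supports in $\cup_{k\ge 0}f^k(W)$, so only $j_i\ge 0$ appear.

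There is no real obstacle here; the only point requiring a moment's care is the truncation step, where one must invoke absolute summability of the double series $\sum_{j,k}\vert c_j\vert^p\mu(E_j\cap f^k(W))$ to justify approximating by a finite subsum in $L^p$, and in cases (2) and (3) one must remember to cut the initial simple approximation to the correct half of the space before performing the partition refinement.
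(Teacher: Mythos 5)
Your proof is correct and follows exactly the route the paper intends: the paper simply remarks that the proposition ``easily follows from the well-known fact that the set of simple functions is dense in $L^p(X)$,'' and your argument (refine a simple approximation against the partition $\{f^k(W)\}_{k\in\Z}$, truncate the resulting absolutely summable series, and cut to the appropriate half of the space for $L_+$ and $L_-$) is precisely the standard filling-in of that remark.
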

\begin{prop}\label{prop:DenseUs}
Let $(X,{\mathcal B},\mu, f)$ be a dissipative system of bounded distortion, generated by $W$. 
\begin{enumerate}
    \item Let $\U (K,t) \in \{\uc{(K,t)}, \ud{(K,t)}\}$. If $\chi_W \in \U(K,t)$, then $ \U(HK,t) = L^p(X)$.  
    \item If $\chi_W \in \ughm{(K,t)}$, then $\ughm{(HK,t)}=L_-$.
  \item If $\chi_{f^{-1}(W)} \in \ughp{(K,t)}$, then $\ughp{(HK,t)}=L_+$.
\end{enumerate}

\end{prop}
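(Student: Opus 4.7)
The plan is to bootstrap from the indicator of $W$ (or of $f^{-1}(W)$) up to every element of $L^p(X)$ (resp.\ $L_\pm$), climbing through the chain of earlier propositions and finishing with an $L^p$-closedness argument that upgrades ``dense'' to ``equal''.

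First I would invoke Proposition~\ref{prop:compfj}, observing that $\chi_W \circ f^j = \chi_{f^{-j}(W)}$, to promote the hypothesis $\chi_W \in \U(K,t)$ (or $\chi_{f^{-1}(W)} \in \ughp{(K,t)}$) to $\chi_{f^j(W)} \in \U(K,t)$ for every admissible $j$: all $j \in \Z$ in the $\uc{}/\ud{}$ cases, all $j \geq 0$ in the $\ughm{}$ case, and all $j \leq -1$ in the $\ughp{}$ case. Proposition~\ref{prop:subW} then refines this to $\chi_{f^j(B)} \in \U(HK,t)$ for every positive-measure $B \subseteq W$ and every such $j$. Propositions~\ref{prop:scalar} and~\ref{finitesum} let me combine these via scalar multiplication and disjoint-support addition, placing the whole class of simple functions listed in Proposition~\ref{prop:simplefun} inside $\U(HK,t)$; by that very proposition this class is $L^p$-dense in $L^p(X)$ (resp.\ $L_-$ or $L_+$).

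To finish I would show $\U(HK,t)$ is closed under $L^p$-limits. Rewriting the defining integrals as $\int_X |\varphi|^p \circ f^{-k}\, d\mu = \Vert T_f^{-k}\varphi\Vert_p^p$, continuity in $\varphi$ is immediate from the boundedness of $T_f$, and strict positivity for $\varphi \neq 0$ follows from its invertibility. Hence, given $\varphi_m \in \U(HK,t)$ with $\varphi_m \to \varphi \neq 0$, the inequality $\Vert T_f^{-k}\varphi_m\Vert_p^p \leq HK t^n \Vert T_f^{-(k+n)}\varphi_m\Vert_p^p$ (or its reversed form in the $\ud{}$ case) passes to the limit for each fixed $k$, after which taking $\sup_k$ or $\inf_k$ preserves the bound. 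For $\ughp{}$ and $\ughm{}$ the same argument applies, with the additional observation that $L_+$ and $L_-$ are closed subspaces of $L^p(X)$ so the limit $\varphi$ still lies in the correct half-space. Combining density and closedness yields $\U(HK,t) = L^p(X)$ (resp.\ $L_\pm$).

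The main obstacle is the closedness step: one must keep the denominators uniformly away from zero along the approximating sequence and in the limit, so that the ratios pass cleanly to the limit and $\sup_k$ (or $\inf_k$) commutes with the pointwise-in-$k$ limit. Both issues are resolved by the fact that $T_f$ is a bounded invertible operator. The remainder of the argument is a routine ladder-climb through the earlier propositions.
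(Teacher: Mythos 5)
Your proposal is correct and follows essentially the same route as the paper: promote $\chi_W$ (resp.\ $\chi_{f^{-1}(W)}$) to the indicators $\chi_{f^j(B)}$ via Propositions~\ref{prop:compfj} and~\ref{prop:subW} (the paper merely applies these two in the opposite order), assemble the dense class of simple functions from Propositions~\ref{prop:scalar}, \ref{finitesum} and~\ref{prop:simplefun}, and pass to the limit. Your explicit closedness argument --- rewriting the integrals as $\Vert T_f^{-k}\varphi\Vert_p^p$ and passing the product-form inequalities to the limit for each fixed $k,n$ before taking $\sup_k$ or $\inf_k$ --- correctly fills in the step the paper compresses into ``passing through limit.''
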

\begin{proof} 
(1) First, by Proposition \ref{prop:subW}, for every $B \subseteq W$ with $\mu(B) >0$,  $\chi_B$ is in $\U(HK, t)$. Then, by Proposition \ref{prop:compfj}, for every $i \in {\mathbb Z}$, for every $B \subseteq f^{i}(W)$ with $\mu(B) >0$,   $\chi_{B}$
is in $\U(HK,t)$.  Now, by Proposition \ref{prop:scalar} and Proposition \ref{finitesum}, 
\[ \left \{ \sum_{i=0}^n a_i \chi_{B_i}: B_i \subseteq f^{j_i}(W), j_i \in \Z, \mu(B_i)>0, B_{i} \cap B_{i'} = \emptyset, i \neq i'  \right \} \subseteq \U(HK,t).\]
Hence, by applying Proposition \ref{prop:simplefun}, and passing through limit, the conclusion follows.

The proofs of (2) and (3) are similar. We only show (2). First, by Proposition \ref{prop:subW}, for every $B \subseteq W$ with $\mu(B) >0$,  $\chi_B$ is in $\ughm(HK, t)$. Then, by Proposition \ref{prop:compfj}, for every $i \geq 0$, for every $B \subseteq f^{i}(W)$ with $\mu(B) >0$,   $\chi_{B}$
is in $\ughm(HK,t)$.  Now, by Proposition \ref{prop:scalar} and Proposition \ref{finitesum}, 
\[ \left \{ \sum_{i=0}^n a_i \chi_{B_i}: B_i \subseteq f^{j_i}(W), j_i \geq 0, \mu(B_i)>0, B_{i} \cap B_{i'} = \emptyset, i \neq i'  \right \} \subseteq \ughm(HK,t).\]
Hence, by applying Proposition \ref{prop:simplefun} and passing through limit, the conclusion follows. 
\end{proof}

{\em Proof of Theorem~\ref{thmSS}.}
\begin{proof} 
We first prove (1).
As Condition~$\hc{}$ is satisfied, by Proposition \ref{proprcW}, there exist $0<t <1$ and $K>0$ such that $\chi_{W} \in \uc{(K,t)}$. By Proposition \ref{prop:DenseUs}, $\uc{(HK,t)} = L^p(X)$, i.e, for all $\varphi \in L^p(X)$,
\begin{equation*}
 \sup _{k \in {\mathbb Z}} {\left (\frac{\int_X \vert \varphi \vert ^p \circ f^{-k} d\mu}{\int_X \vert \varphi \vert ^p \circ f^{-(k+n)} d\mu}\right )} \leq HKt^n .   
\end{equation*}
Plugging $k =-n$ in above, we have that, for all $n \in \N$,
\[ {\frac{\int_X \vert \varphi \vert ^p \circ f^{n} d\mu}{\int_X \vert \varphi \vert ^p d\mu}} \leq HK t^n. \] 
As
\[\int_X \vert \varphi \vert ^p \circ f^{n} d\mu= \Vert T_{f}^{n}(\varphi)\Vert_{p}^{p},\]
we have that
\[\frac{\Vert T_{f}^{n}(\varphi) \Vert_{p}^{p}}{\Vert \varphi \Vert_{p}^{p}}  \le HKt^n.\] Hence, 
\[\lim_{n \rightarrow \infty} \Vert T_{f}^{n} \Vert ^\frac{1}{n}\leq t < 1.\]
As the spectral radius of $T_{f} $ is $\lim_{n \rightarrow \infty} \Vert T_{f}^{n} \Vert ^\frac{1}{n}$, we have that $T_{f}$ is  a contraction.

Proof of (2) is analogous to the above case.

Finally, let us prove (3). 
Assume that $\gh{}$ holds. By Proposition \ref{proprcW}, there exist $0<t  <1$ and $K >0$ such that $\chi_{W} \in \ughm{(K,t)}$ and $\chi_{f^{-1}(W)} \in  \ughp{(K,t)}$. Then, by Proposition \ref{prop:DenseUs}, $\ughm{(HK,t)} = L_{-}$ and\\ $\ughp{(HK,t)}= L_{+}$. Hence, for all $\varphi \in L_{+}$,
\[\sup _{k \in -{{\mathbb N}}_{0}} \left ( \frac{\int_X  \vert \varphi \vert ^p \circ f^{-(k-n)}  d\mu}{\int_X  \vert \varphi \vert ^p \circ f^{-k}  d\mu} \right ) \leq HK t^n.\]
Plugging $k =0$ in above, we have that, for all $n \in \N$,
\[ {\frac{\int_X \vert \varphi \vert ^p \circ f^{n} d\mu}{\int_X \vert \varphi \vert ^p d\mu}} \leq HK t^n,\] 
implying that, for all $\varphi \in L_{+}$,
\[\frac{\Vert T_{f}^{n}(\varphi) \Vert_{p}^{p}}{\Vert \varphi \Vert_{p}^{p}}  \le HKt^n.\] 
Hence, the spectral radius of the restriction of $T_{f}$ to $L_{+}$, ${T_f}_{|_{L_+}}$, is less than or equal to $t <1$, 
therefore ${T_f}_{|_{L_+}}$ is a contraction. 

Similarly, as $\ughm{(HK,t)} = L_{-}$, we have that  for all $\varphi \in L_{-}$
\[ \inf_{k \in {\mathbb N}_{0} } {\left (\frac{\int_X  \vert \varphi \vert ^p \circ f^{-k}  d\mu}{\int_X  \vert \varphi \vert ^p \circ f^{-(k+n)}  d\mu}\right )} \geq HK \frac{1}{{t}^n}.\]  
Therefore, 
\[ \sup_{k \in {\mathbb N}_{0} } {\left (\frac{\int_X  \vert \varphi \vert ^p \circ f^{-(k+n)}  d\mu}{\int_X  \vert \varphi \vert ^p \circ f^{-k}  d\mu}\right )} \leq \frac{1}{HK} {t}^n.\]  
Plugging $k =0$ in above, we have that, for all $n \in \N$,
\[ \sup_{k \in {\mathbb N}_0 } {\left (\frac{\int_X  \vert \varphi \vert ^p \circ f^{-n}  d\mu}{\int_X  \vert \varphi \vert ^p  d\mu}\right )} \leq \frac{1}{HK} {t}^n.\] 
As
\[\int_X \vert \varphi \vert ^p \circ f^{-n} d\mu= \Vert {({T_{f}}^{-1})}^{n}(\varphi)\Vert_{p}^{p},\]
we have that, for all $\varphi \in L_{-}$,
\[\frac{\Vert {({T_{f}}^{-1})}^{n}(\varphi) \Vert_{p}^{p}}{\Vert \varphi \Vert_{p}^{p}}  \le HK{t}^n.\] 
Hence, the spectral radius of the restriction of ${T_{f}}^{-1}$ to $L_{-}$, ${T_f}^{-1}_{|_{L_-}}$, is less than or equal to $t <1$, therefore ${T_{f}}^{-1}_{|_{L_-}}$ is a contraction.

Thus, we have shown that $L^p(X) = L_+ \oplus L_-$, $T_f(L_+) \subseteq L_+$ and $T_f^{-1}(L_-) \subseteq L_-$, ${T_f}_{|_{L_+}}$ and ${T_f}_{|_{L_-}}^{-1}$ are contractions, i.e., $T_f$ is generalized hyperbolic.

\end{proof}

%@@@@@@@@@@@@@@@@@@@@
\subsection{Proof of Theorem~\ref{thmSN}}
%@@@@@@@@@@@@@@@@@@@

In general, a factor of a map with shadowing property does not have the shadowing property. A condition which guarantees this in the dynamics of compact metric spaces was given by Good and Meddaugh (\cite{GoodMeddaughIM2020}: Theorem 23). Below, we give a condition in the setting of linear dynamics which guarantees that factors of maps with the shadowing property have the shadowing property.  Using this result and the characterization of the shadowing property for weighted backward shifts, we arrive at the proof of the main theorem in this subsection. We begin by recalling the definition of a factor in linear dynamics.  

\begin{defn}\label{defSC}
Let $(X,S)$ and $(Y,T)$ be two linear dynamical systems. We say that {\em $T$ is a factor of $S$} if there exists a {\em factor map $\Pi$}, i.e., a linear, 
continuous, onto map $\Pi : X \rightarrow Y$ such that  $\Pi \circ S=T \circ \Pi$. Moreover, we say that {\em $\Pi$ admits a bounded selector} if the following condition holds. 
\[\exists L >0 \text{ s.t., } \forall y \in Y, \exists x \in {\Pi}^{-1}(y) \text{ with }\Vert x \Vert \leq L \Vert y \Vert. \] 
\end{defn}
\begin{lem} \label{shadowfactor}
Let $(X,S)$ and $(Y,T)$ be linear dynamical systems with a factor map $\Pi: X \rightarrow Y$ that admits a bounded selector. If $S$ has the shadowing property, then so does $T$. 
\end{lem}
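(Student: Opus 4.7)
The plan is to pass through the Lemma~\ref{LEM1} characterization of the shadowing property: an invertible linear operator has the shadowing property exactly when every bounded perturbation sequence $\{z_n\}$ admits a bounded solution $\{y_n\}$ to the inhomogeneous recursion $y_{n+1} = Ty_n + z_n$, with a uniform control constant. Using this reformulation both upstairs and downstairs reduces the problem to a lifting argument via the bounded selector.

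Concretely, I would start with an arbitrary bounded sequence $\{z_n\}_{n \in \Z}$ in $Y$ and use the bounded selector to choose, for each $n$, an element $\tilde z_n \in \Pi^{-1}(z_n)$ with $\|\tilde z_n\| \le L \|z_n\|$. This produces a bounded sequence $\{\tilde z_n\}$ in $X$ with $\sup_n \|\tilde z_n\| \le L \sup_n \|z_n\|$. Next, applying Lemma~\ref{LEM1} to the operator $S$, which has the shadowing property, I obtain a constant $K > 0$ (independent of the sequence) and a sequence $\{\tilde y_n\}$ in $X$ satisfying
\[
\tilde y_{n+1} = S\tilde y_n + \tilde z_n, \qquad \sup_{n \in \Z} \|\tilde y_n\| \le K \sup_{n \in \Z} \|\tilde z_n\|.
\]

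Then I would push this solution down by setting $y_n := \Pi(\tilde y_n)$. The semiconjugacy $\Pi \circ S = T \circ \Pi$ and linearity of $\Pi$ give
\[
y_{n+1} = \Pi(\tilde y_{n+1}) = \Pi(S\tilde y_n + \tilde z_n) = T\Pi(\tilde y_n) + \Pi(\tilde z_n) = T y_n + z_n,
\]
so $\{y_n\}$ solves the target recursion for $T$. Continuity of $\Pi$ yields the uniform bound
\[
\sup_{n \in \Z} \|y_n\| \le \|\Pi\| \sup_{n \in \Z} \|\tilde y_n\| \le \|\Pi\| K L \sup_{n \in \Z} \|z_n\|,
\]
so setting $K' := \|\Pi\| K L$ verifies the Lemma~\ref{LEM1} criterion for $T$, and thus $T$ has the shadowing property.

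There is no serious obstacle: the bounded selector is used precisely to ensure that lifts of bounded sequences remain bounded with a universal constant, which is exactly what the uniform-constant formulation in Lemma~\ref{LEM1} needs. The only point requiring mild care is to note that the selector is applied pointwise in $n$ (no measurability or linearity of the selector is needed), and that all three ingredients ($L$ from the selector, $K$ from shadowing of $S$, and $\|\Pi\|$ from continuity of the factor map) combine multiplicatively into one constant independent of the chosen $\{z_n\}$.
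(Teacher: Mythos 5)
Your proposal is correct and follows essentially the same argument as the paper: lift the bounded sequence via the selector, solve the recursion upstairs using the Lemma~\ref{LEM1} formulation for $S$, and push the solution down through $\Pi$ using the semiconjugacy, with the constant $\Vert \Pi \Vert K L$. No differences worth noting.
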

\begin{proof} Let $L$ be a constant which witnesses that $\Pi$ has a bounded selector. 
We use the formulation of the shadowing property stated in Lemma \ref{LEM1}. Let $K$ be a constant associated with the fact that $S$ has the shadowing property. 
Let $\{y_n\}_{n \in \mathbb Z}$ be a bounded sequence in $Y$, with $\sup_{n \in {\mathbb Z}} \Vert y_{n} \Vert = M$. 
By hypothesis, we can take, for each $n \in {\mathbb Z}$, $x_{n} \in X$ such that $\Pi(x_{n})= y_{n}$, with $\Vert x_{n} \Vert \leq  L \Vert y_{n} \Vert$.
Then, $\{x_n\}_{n \in \mathbb Z}$ is a  sequence in $X$  with $\| x_n\| \le LM
$, $n \in \Z$.  As $S$ has the shadowing property, there is a sequence $\{s_n\}_{n \in \mathbb Z}$ in $X$ such that \[ \sup_{n \in \mathbb Z} \Vert s_n \Vert \leq K \sup_{n \in \mathbb Z} \Vert x_n \Vert \hspace{0.3 cm}\text{ and } \hspace{0.3 cm} s_{n+1}=S(s_n) + x_n, \text{ for all $n \in \mathbb Z$.}\] 
Setting $t_n = \Pi (s_n)$, we have that 
\[\sup_{n \in \mathbb Z} \Vert t_n \Vert
\leq  \Vert \Pi \Vert \sup_{n \in \mathbb Z} \Vert s_{n} \Vert  
\leq \Vert \Pi \Vert  K\sup_{n \in \mathbb Z} \Vert x_n \Vert 
\leq \Vert \Pi \Vert K  L \Vert y_{n} \Vert.\]
Moreover, applying $\Pi$ to the equation 
\[s_{n+1}=S(s_n) + x_n, \] 
we obtain that \[
t_{n+1}   =  T(t_{n}) + y_{n}.\]

Hence, we have proved that there exists a constant $C$, namely $C= \Vert \Pi \Vert  KL,$ such that, for every bounded sequence  $\{y_n\}_{n \in \mathbb Z}$ in $Y$, there is a sequence $\{t_n\}_{n \in \mathbb Z}$ in $Y$ such that 
\[ \sup_{n \in \mathbb Z} \Vert t_n \Vert \leq C \sup_{n \in \mathbb Z} \Vert y_n \Vert \hspace{0.3 cm}\text{ and } \hspace{0.3 cm} t_{n+1}=T(t_n) + y_n, \text{ for all $n \in \mathbb Z$},\] 
yielding that $T$ has the shadowing property. 
\end{proof}

\begin{lem} \label{factorBw} Let $(X,{\mathcal B},\mu, f)$ be a  dissipative system of bounded distortion generated by $W$. Consider the weighted backward shift $B_w$ on $\ell^p(\Z)$ with weights
\[w_{k} =  \left( \frac{\mu(f^{k-1}(W))}{\mu(f^{k}(W))}\right)^{\frac{1}{p}} .\] Then, $B_w$ is a factor of the map $T_f$ by a factor map $\Pi$ admitting a bounded selector. 
\end{lem}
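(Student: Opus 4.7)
The plan is to construct an explicit linear map $\Pi\colon L^p(X)\to\ell^p(\Z)$ by properly normalized integration and then verify intertwining, boundedness, and existence of a bounded selector in turn. Writing $W_k:=f^k(W)$, the natural unit vectors $\psi_k:=\chi_{W_k}/\mu(W_k)^{1/p}$ satisfy $T_f\psi_k=w_k\psi_{k-1}$, so $(\ell^p(\Z),B_w)$ already embeds isometrically into $(L^p(X),T_f)$ via $e_k\mapsto\psi_k$; the task is to produce a bounded left inverse of this embedding that also intertwines $T_f$ with $B_w$.

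Concretely, I will set
\[(\Pi\varphi)_k\;:=\;c_k\int_W \varphi\circ f^k\,d\mu,\qquad c_k:=\bigl(\mu(W_k)/\mu(W)\bigr)^{1/p}.\]
Linearity is immediate. The normalization is rigged so that $c_k/c_{k+1}=(\mu(W_k)/\mu(W_{k+1}))^{1/p}=w_{k+1}$, and the intertwining
\[(\Pi T_f\varphi)_k=c_k\int_W\varphi\circ f^{k+1}\,d\mu=w_{k+1}c_{k+1}\int_W\varphi\circ f^{k+1}\,d\mu=(B_w\Pi\varphi)_k\]
drops out for free from $(T_f\varphi)\circ f^k=\varphi\circ f^{k+1}$.

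The main step is boundedness, which is where the bounded distortion hypothesis does the real work. I apply H\"older's inequality on $W$ and the change of variables $y=f^k(x)$ to rewrite
\[|(\Pi\varphi)_k|^p\le c_k^p\,\mu(W)^{p/q}\!\int_{W_k}|\varphi|^p\,d(\mu\circ f^{-k}).\]
By Proposition~\ref{diststar} applied with $s=k$, $t=-k$, the Radon--Nikodym derivative $d(\mu\circ f^{-k})/d\mu$ on $W_k$ is essentially bounded above by $H\,\mu(W)/\mu(W_k)$, and this factor of $\mu(W_k)^{-1}$ cancels exactly against $c_k^p=\mu(W_k)/\mu(W)$. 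The result is a uniform estimate $|(\Pi\varphi)_k|^p\le H\mu(W)^{p/q}\|\varphi\chi_{W_k}\|_p^p$, which after summing in $k$ yields $\|\Pi\varphi\|_{\ell^p}^p\le H\mu(W)^{p/q}\|\varphi\|_p^p$.

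Finally, for surjectivity together with a bounded selector I will take
\[S(\{a_k\}):=\sum_{k\in\Z}\frac{a_k}{c_k\,\mu(W)}\,\chi_{W_k}.\]
Because $f^k(x)\in W_k$ for $x\in W$, a one-line computation gives $\Pi\circ S=\mathrm{id}_{\ell^p(\Z)}$, and a direct estimate using $c_k^p=\mu(W_k)/\mu(W)$ produces $\|S(\{a_k\})\|_p=\mu(W)^{-1/q}\|\{a_k\}\|_{\ell^p}$, exhibiting $S$ as the bounded selector demanded by Definition~\ref{defSC}. The main obstacle throughout is the boundedness step: without bounded distortion the derivatives $d(\mu\circ f^{-k})/d\mu|_{W_k}$ are decoupled from the scaling $c_k$ and the uniform control of $\|\Pi\|$ collapses.
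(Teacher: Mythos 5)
Your proof is correct and follows essentially the same route as the paper's: the same integration map $\Pi$ (differing only by the global constant $\mu(W)^{1/q}$ in the normalization), the same H\"older/Jensen estimate combined with Proposition~\ref{diststar} to bound $\|\Pi\|$, and the same explicit section by weighted indicators of the $f^k(W)$ to produce the bounded selector. No gaps.
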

\begin{proof} As $T_f$ is an invertible composition operator, we have that $f$ and $f^{-1}$ satisfy Condition~(\ref{condition}). Therefore, $0 < \inf _{n \in Z} |w_n|  \le \sup _{n \in Z} |w_n| < \infty $, implying that  $B_w$ is an invertible operator. 
We need to find a bounded linear surjective map $\Pi: L^p(X) \rightarrow \ell ^p({\mathbb Z})$ admitting a bounded selector
such that the diagram in Figure \ref{Figsemiconj2} commutes.

\begin{figure}[ht] 
\centering
\begin{tikzcd}
L^p(X)  \arrow[r, "T_f"] \arrow[d, "\Pi"]
& L^p(X)  \arrow[d, "\Pi"] \\ \ell^p({\mathbb Z}) \arrow[r,  "B_w"]
&\ell^p({\mathbb Z})
\end{tikzcd} 
\captionsetup{justification=centering,margin=2cm}
\caption{Factor map from $(L^p(X), T_f)$ to $(\ell^p({\mathbb Z}), B_w)$.}
\label{Figsemiconj2}
\end{figure} 

Let $\varphi \in L^p(X)$.  Define $\Pi(\varphi) = {\bf x} = \{x_{k}\}_{k \in {\mathbb Z}}$, where \[x_{k} = \dfrac{\mu(f^{k}(W))^{\frac{1}{p}}}{\mu(W)} \int_{W}  \varphi \circ f^{k} d \mu.\]
It is clear that $\Pi$ is linear. 
Now we show that $\Pi \circ T_{f} = B_{w} \circ \Pi$, that is, the diagram in Figure  \ref{Figsemiconj2} commutes. Indeed, for any $\varphi \in L^{p}(X)$, for any ${k \in \mathbb Z}$, 
\begin{align*}
{\left((\Pi \circ T_{f})(\varphi)\right)}_{k} &  ={\left(\Pi(\varphi \circ f)\right)}_{k} = \dfrac{\mu(f^{k}(W))^{\frac{1}{p}}}{\mu(W)} \int_{W} \left(\varphi \circ f \right) \circ  f^{k} d \mu\\
& =  \dfrac{\mu(f^{k}(W))^{\frac{1}{p}}}{\mu(W)} \int_{W} \varphi \circ f^{k+1} d \mu
\end{align*}
and, on the other hand, it is also the case that
\begin{align*}
{\left (B_{w} \circ \Pi\right)(\varphi)}_{k} & = w_{k+1} {\left( \Pi(\varphi)\right)}_{k+1} = \left( \frac{\mu(f^{k}(W))}{\mu(f^{k+1}(W))}\right)^{\frac{1}{p}}\frac{\mu(f^{k+1}(W))^{\frac{1}{p}}}{\mu(W)}  \int_{W} \varphi \circ f^{k+1} d \mu \\
& = \dfrac{\mu(f^{k}(W))^{\frac{1}{p}}}{\mu(W)} \int_{W} \varphi \circ f^{k+1} d \mu.
\end{align*}

We now show that $\Pi$ is a bounded operator with $\|\Pi\|_p \le H^{\frac{1}{p}} $, where $H$ is the bounded distortion constant in $(\Diamond \Diamond)$. In the proof, we will use the following version of Jensen Inequality: \[\left( \int _B g d\mu \right)^p \leq \mu(B)^{p-1} \int_B \vert g \vert ^p d\mu,\]
as well as the fact that if $\nu \ll \mu$, then 
\[\left \| \left. \frac{d \nu}{d \mu} \right |_{W} \right \|_{\infty} \le \sup_{ \substack{B \subseteq W \\ \mu (B) \neq 0} }\frac{ \nu(B)}{ \mu (B)}.\]
Let $\varphi \in L^{p}(X).$
 Then, 
\begin{align*}
{\Vert \Pi(\varphi) \Vert}_{p}^{p}  & =   {\Vert {\bf x} \Vert}_{p}^{p}  =  \sum_{k \in {\mathbb Z}} {\vert  x_{k}  \vert}^{p}   =  \sum_{k \in {\mathbb Z}} {\left(\frac{\mu(f^{k}(W))^{\frac{1}{p}}}{\mu(W)} \right)}^{p} {\left | \int_{W} \varphi \circ f^{k} d \mu  \right |}^{p} \\ 
&  \leq \sum_{k \in {\mathbb Z}} \frac{ \mu(f^{k}(W))}{\mu(W)^p} \mu(W)^{p-1} \int_W \vert \varphi \vert ^p \circ f^k  d\mu \\
& =  \sum_{k \in {\mathbb Z}} \frac{ \mu(f^{k}(W))}{\mu(W)}  \int_{f^k(W)} \vert \varphi \vert^p  d\mu f^{-k}\\
& = \sum_{k \in {\mathbb Z}} \frac{ \mu(f^{k}(W))}{\mu(W)}  \int_{f^k(W)} \vert \varphi \vert^p  \frac{d\mu f^{-k}}{d\mu} d\mu \\
& \leq \sum_{k \in {\mathbb Z}} \frac{ \mu(f^{k}(W))}{\mu(W)} \left. \left\|\frac{d\mu f^{-k}}{d\mu}\right |_{f^k(W)}\right  \|_{\infty} \int_{f^k(W)} \vert \varphi \vert^p d\mu \\
& \leq \sum_{k \in {\mathbb Z}} \frac{ \mu(f^{k}(W))}{\mu(W)}  \sup_{\substack{f^k(B), \\B\subseteq W}}\left(\frac{\mu( f^{-k}(f^k(B)))}{\mu(f^k(B))}\right) \int_{f^k(W)} \vert \varphi \vert^p d\mu \\
& = \sum_{k \in {\mathbb Z}} \frac{ \mu(f^{k}(W))}{\mu(W)}  \sup_{\substack{f^k(B), \\B\subseteq W}}\left( \frac{\mu(B)}{\mu(f^k(B))}\right) \int_{f^k(W)} \vert \varphi \vert^p d\mu \\
& \leq \sum_{k \in {\mathbb Z}} \frac{ \mu(f^{k}(W))}{\mu(W)} H \frac{\mu(W)}{\mu(f^k(W))} \int_{f^k(W)} \vert \varphi \vert^p d\mu\\
&= H \sum_{k \in {\mathbb Z}} \int_{f^k(W)} \vert \varphi \vert^p d\mu =H \Vert \varphi \Vert_p^p.
\end{align*} 
Hence, we have shown that 
\[{\Vert \Pi(\varphi) \Vert}_{p} \leq H^{\frac{1}{p}} {\Vert \varphi \Vert}_{p},\]
proving the continuity of $\Pi$.

We now prove that $\Pi$ admits a bounded selector with $L = 1$.  Let ${\bf x}= \{x_k\}_{k \in {\mathbb Z}} \in \ell ^p (\Z) $. We need to find $\varphi \in L^p(X)$ such that $\Pi(\varphi)={\bf x} $ with $\|\varphi\|_p \le \|{\bf x}\|_p$.
We let 
\[\varphi = \sum_{k \in {\mathbb Z}} \frac{x_k}{\mu(f^{k}(W))^{\frac{1}{p}}}  \chi_{f^{k}(W)}.\]
It is easy to verify that ${\|\varphi \|}_p = \|{\bf x} \|_p$.
Moreover,   
\begin{align*}
{\left(\Pi({\varphi})\right)}_{k} & =\frac{\mu(f^{k}(W))^{\frac{1}{p}}}{\mu(W)} \int_{W} \varphi \circ f^{k} d \mu \\
& =  \frac{\mu(f^{k}(W))^{\frac{1}{p}}}{\mu(W)}  \int_{W} \sum_{n \in {\mathbb Z}} \frac{x_n}{\mu(f^{n}(W))^{\frac{1}{p}}}  \chi_{f^{n}(W)} \circ f^{k} d \mu\\
& = \frac{\mu(f^{k}(W))^{\frac{1}{p}}}{\mu(W)} \sum_{n \in {\mathbb Z}} \int_{W}  \frac{x_n}{\mu(f^{n}(W))^{\frac{1}{p}}}  \chi_{f^{n}(W)} \circ f^{k} d \mu\\
& =  \frac{\mu(f^{k}(W))^{\frac{1}{p}}}{\mu(W)} \frac{x_{k}}{\mu(f^{k}(W))^{\frac{1}{p}}} \int_{W}  \chi_{f^{k}(W)} \circ f^{k} d \mu\\
& = \frac{x_{k}}{\mu(W)} \mu(W)= x_k.
\end{align*}
Hence, we have shown that $\Pi$ is a bounded, linear, surjective map admitting a bounded selector and such that $\Pi \circ T_f = B_w \circ \Pi$, completing the proof.
\end{proof}

{\em Proof of Theorem~\ref{thmSN}.}
Assume the hypotheses. By Lemma~\ref{factorBw}, we have that $B_w: \ell^p(Z) \rightarrow \ell^p(Z)$ is a factor of $T_f$ with 
\[w_{k} = \left( \frac{\mu(f^{k-1}(W))}{\mu(f^{k}(W))}\right)^{\frac{1}{p}}.\] Moreover, the factor map $\Pi$ which exhibits this admits a bounded selector. As $T_f$ has the shadowing property, by Lemma~\ref{shadowfactor}, we have that $B_w$ also has the shadowing property. By Theorem~\ref{theoSHADBW}, we have that Condition a), b) or c) of that theorem is satisfied. Now using the fact that 
\[w_k \ldots w_{k+n} = w_k \cdot \left( \frac{\mu(f^{k}(W))}{\mu(f^{k+1}(W)) }\right)^{\frac{1}{p}}\ldots \left( \frac{\mu(f^{k+n-1}(W))}{\mu(f^{k+n}(W))}\right)^{\frac{1}{p}} =w_k \cdot \left(\frac{\mu(f^{k}(W))}{\mu(f^{k+n}(W)) }\right)^{\frac{1}{p}},
\] and that
$ 0< \inf |w_k| \le  \sup |w_k| < \infty$,  
it is easy to check that Condition a) implies Condition~\ref{hc}, Condition b) implies Condition~\ref{hd} and Condition c) implies Condition~\ref{gh}. Indeed, for instance, if Condition a) holds, then

\[\lim_{n \rightarrow \infty}( \sup _{k \in {\mathbb Z}} \vert w_k \cdots w_{k+n}\vert ^{\frac{1}{n}} )<1,\] or, equivalently, 
\[\lim_{n \rightarrow \infty} \sup _{k \in {\mathbb Z}} \left \vert \left( \frac{\mu(f^{k}(W))}{\mu(f^{k+n}(W))}\right) ^{\frac{1}{p}}\right  \vert ^{\frac{1}{n}} =\lim_{n \rightarrow \infty} \sup _{k \in {\mathbb Z}} \left \vert w_k \cdot \left(\frac{\mu(f^{k}(W))}{\mu(f^{k+n}(W))}\right)^{\frac{1}{p}}\right  \vert ^{\frac{1}{n}}  < 1, \] 
implying that 
\[\lim_{n \rightarrow \infty} \sup _{k \in {\mathbb Z}} \left \vert \left( \frac{\mu(f^{k}(W))}{\mu(f^{k+n}(W))}\right) \right  \vert ^{\frac{1}{n}} < 1,\]
and yielding that Condition~\ref{hc} holds. 
Analogous arguments show the other two implications. 
%@@@@@@@@@@@@@@@@@@@@
\subsection{Proof of Theorem~\ref{thmRN}}
%@@@@@@@@@@@@@@@@@@@@

We will show in detail that Condition~\ref{hc} holds if and only if Condition~\ref{contractionRN} holds. Analogous arguments  will show that Condition~\ref{hd} and Condition~\ref{gh} hold if and only if Condition~\ref{dialationRN} and Condition~\ref{GHRN} hold, respectively. 

By the definition of Radon-Nikodym derivative, we have that, for all $i \in \Z$, 
\[ m_i \cdot \mu (W) \le \mu (f^i(W)) \le M_i \cdot \mu (W),\]
implying

\[  \frac{m_k}{M_{k+n}} \le \frac{\mu(f^{k}(W))}{\mu(f^{k+n}(W))} \le \frac{M_k}{m_{k+n}}.\]
Recall the hypothesis that $\frac{M_i}{m_i} < K$ for all $i \in {\mathbb Z}$. Hence, 
we have that $\frac{M_k}{m_{k+n}} \le K^2 \cdot \frac{m_k}{M_{k+n}}$, and putting it all together,
\[
 \frac{m_k}{M_{k+n}} \le \frac{\mu(f^{k}(W))}{\mu(f^{k+n}(W))} \le \frac{M_k}{m_{k+n}} \le K^2 \cdot \frac{m_k}{M_{k+n}}.
\] 
Now $\lim \limits_{n \rightarrow \infty} (K^2) ^{\frac{1}{n}} = 1$. This 
implies that 
\[\uplim_{n \rightarrow \infty} \sup _{k \in {\mathbb Z}}  \left (  \frac{m_k}{M_{k+n}} \right ) ^{\frac{1}{n}} = \uplim_{n \rightarrow \infty} \sup _{k \in {\mathbb Z}}  \left ( \frac{\mu(f^{k}(W))}{\mu(f^{k+n}(W))} \right ) ^{\frac{1}{n}} = \uplim_{n \rightarrow \infty} \sup _{k \in {\mathbb Z}}  \left (  \frac{M_k}{m_{k+n}} \right ) ^{\frac{1}{n}},
\]
completing the proof. \qedsymbol

\section{Open Questions}
We now make some final remarks and state some open questions. 

The following question addresses whether the condition of bounded distortion is necessary.
\begin{prob}
Suppose we have a dissipative system but we drop the hypothesis of bounded distortion. 
\begin{enumerate}
    \item Is it still the case that an operator is generalized hyperbolic if and only if it has the shadowing property?
    \item Does the characterization provided for the shadowing property still hold?
\end{enumerate}
\end{prob}
Next we inquire what happens in the direction orthogonal to dissipative systems.  
\begin{prob}
Suppose we work with purely conservative systems instead of dissipative systems.
\begin{enumerate}
    \item Is it still the case that an operator is generalized hyperbolic if and only if it has the shadowing property?
    \item Is there a natural characterization for the shadowing property?
    \item In particular, what happens if we consider linear operator induced by odometers as in \cite{BDDPOdometers}?
\end{enumerate}
\end{prob}
Finally, more generally, we have the following question.
\begin{prob}
In the arbitrary setting of linear dynamics, does shadowing imply generalized hyperbolicity?
\end{prob}
\bibliographystyle{siam}
\bibliography{biblio}

\Addresses

\end{document}